\theoremstyle{definition}
\newtheorem*{remark1}{Holomorphic examples}
\newtheorem*{remark2}{Quasiregular examples}
\newtheorem*{remark3}{Acknowledgements}
\newtheorem*{definition}{Definition}
\theoremstyle{plain}
\newtheorem{lemma}{Lemma}[section]
\theoremstyle{plain}
\newtheorem{theorem}{Theorem}[section]
\newcommand{\capac}{\operatorname{cap}}
\newcommand{\card}{\operatorname{card}}
\title{\textbf{Permutable Quasiregular Maps}}
\begin{document}
	\author{ATHANASIOS TSANTARIS}
	\address{School of Mathematical Sciences, University of Nottingham, Nottingham NG7 2RD.}
	\email{Athanasios.Tsantaris@Nottingham.ac.uk;}

	\begin{abstract}
		Let $f$ and $g$ be two quasiregular maps in $\mathbb{R}^d$ that are of transcendental type and also satisfy $f\circ g =g \circ f$. We show that if the fast escaping sets of those functions are contained in their respective Julia sets then those two functions must have the same Julia set. We also obtain the same conclusion about commuting quasimeromorphic functions with infinite backward orbit of infinity. Furthermore we show that permutable quasiregular functions of the form $f$ and $g=\phi\circ f$, where $\phi$ is a quasiconformal map, have the same Julia sets and that polynomial type quasiregular maps cannot commute with transcendental type ones unless their degree is less than or equal to their dilatation.
	\end{abstract}
	\maketitle
	\section{Introduction and Results}
	\let\thefootnote\relax\footnote{2020 \textit{Mathematics Subject Classification}. Primary 37F10; Secondary 30C65, 30D05}
	The general theory of iteration of holomorphic maps starts from the seminal work of Fatou \cite{fatou1919} and Julia \cite{Julia1918}. Both of them defined a partition of the complex plane in two sets. Those two sets today bear their names. They are the \textit{Fatou set}, $\mathcal{F}$, and the \textit{Julia set}, $\mathcal{J}$. In order to define them let us consider a holomorphic function $f:\mathbb{C}\to \mathbb{C}$ and denote by $ \left\lbrace  f^n\right\rbrace  $ the family of iterates of $f$, namely the family 
	$$\{ \underbrace{f\circ f \cdots \circ f}_\text{$n$ times}: n\in \mathbb{N}\}.$$
	Then the $\textit{Fatou set}$, $\mathcal{F}$, is defined as the set of points in a neighbourhood of which this family is normal and the \textit{Julia set}, $\mathcal{J}$, is defined as its complement. Fatou and Julia initially developed their theory for rational functions and later on Fatou \cite{fatou1926} also considered iteration of  transcendental entire functions. We refer to \cite{carleson-gamelin,milnor} for an introduction to rational iteration theory and to the survey \cite{survey} for the case of entire and meromorphic functions.\\\\
	Two holomorphic functions $f$ and $g$ are called \textit{permutable} or \textit{commuting} if they satisfy the equation $$f\circ g =g\circ f.$$
	A very old problem is to characterize all classes of functions that satisfy this equation. It turns out that commuting functions have a very similar dynamic behaviour. Indeed, one can prove that if $f$ and $g$ are permutable rational functions then they have the same Julia set. That was already shown by both Fatou \cite{fatou1923} and Julia \cite{julia1922} who used this fact to find all commuting rational functions that do not share an iterate (i.e. $f^m\not=g^n$ for all $n,m$) and do not have as their common Julia set the entire complex plane. Much later Eremenko in \cite{er2} developed their method further and managed to classify all commuting rational functions that do not share an iterate.  It is also worth mentioning here that Ritt in \cite{Ri1,Ri2} solved the same problem by using completely different methods.\\\\	
	For transcendental entire functions the problem is much harder and is still open. It is not even known if permutable  transcendental entire functions have the same Julia set or not. However Bergweiler and Hinkkanen \cite{B-H} in 1999, by introducing the so called \textit{fast escaping set} $\mathcal{A}$ $\left( f \right)$, managed to prove the following.
	\begin{theorem}\label{theorem berg}(\cite[Theorem 2]{B-H})
		Let $f$ and $g$ be permutable, transcendental entire functions such that $\mathcal{A}$$(f)\subset \mathcal{J}$$(f)$ and $\mathcal{A}$$(g)\subset \mathcal{J}$$(g)$ then $\mathcal{J}$$(f)$=$\mathcal{J}$$(g)$.
	\end{theorem}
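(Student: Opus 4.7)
The plan is to argue by contradiction. Suppose $\mathcal{J}(f)\neq\mathcal{J}(g)$; by symmetry we may pick a point $z_0\in\mathcal{J}(f)\cap\mathcal{F}(g)$ and let $U$ denote the Fatou component of $g$ containing $z_0$. The strategy is to use the non-normality of $\{f^n\}$ at $z_0$ to spread $U$ under forward $f$-iteration over essentially all of $\mathbb{C}$, intersect what one gets with $\mathcal{A}(g)$, and then exploit the commutation relation to pull a fast-escaping $g$-orbit back into $U$, contradicting the hypothesis $\mathcal{A}(g)\subset\mathcal{J}(g)$.

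The central ingredient I would establish first is a commutation lemma: the set $\mathcal{A}(g)$ is completely invariant under $f$, meaning $z\in\mathcal{A}(g)$ if and only if $f(z)\in\mathcal{A}(g)$. The forward direction exploits $g^n(f(z))=f(g^n(z))$ together with growth estimates showing that applying a transcendental entire function to an orbit that escapes faster than any iterate of $M(r,g)$ still produces a fast-escaping sequence; this requires minimum-modulus-type information on circles of large radius, since a priori $f$ can be small at arbitrarily large moduli. The reverse implication is more delicate for the same reason, and one handles it by comparing $M(r,f)$ with $M(r,g)$ on a suitable sequence of radii determined by the escape rate of the $g$-orbit of $f(z)$.

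With the lemma in hand the contradiction is short. Since $z_0\in\mathcal{J}(f)$ and $U$ is an open neighbourhood of $z_0$, the blowing-up property of Julia sets of transcendental entire functions (a consequence of Montel's theorem together with Picard) gives that $\bigcup_{n\geq 0}f^n(U)$ covers $\mathbb{C}$ with at most one exceptional value. Because $\mathcal{A}(g)$ is nonempty, and in fact unbounded, we may pick $n$ and $z\in U$ with $f^n(z)\in\mathcal{A}(g)$. Iterating the complete-invariance lemma yields $z\in\mathcal{A}(g)$, and combined with the standing hypothesis $\mathcal{A}(g)\subset\mathcal{J}(g)$ we obtain $z\in U\cap\mathcal{J}(g)\subset\mathcal{F}(g)\cap\mathcal{J}(g)=\emptyset$, the desired absurdity. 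Exchanging the roles of $f$ and $g$ then produces the reverse inclusion and hence $\mathcal{J}(f)=\mathcal{J}(g)$.

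The main obstacle is clearly the complete-invariance lemma. What makes it delicate is that $\mathcal{A}(g)$ is defined by a precise quantitative escape rate rather than a topological or normality condition, so transferring membership through $f$ cannot be done by the soft commutation-plus-Montel arguments that suffice in the rational setting (where one shows only that $f^{-1}(\mathcal{J}(g))=\mathcal{J}(g)$); instead one must establish quantitative control of $f$ on circles whose radii grow like the escape rate of a $g$-orbit. Once this is pinned down, the rest of the proof follows the standard blowing-up-plus-invariance template familiar from the rational commutation theorem of Fatou and Julia.
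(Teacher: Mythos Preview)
Your overall architecture is correct and matches the paper's approach (the paper does not prove Theorem~\ref{theorem berg} itself but proves its quasiregular generalisation, Theorem~\ref{my theorem}, by an argument that specialises to the entire case). Both arguments hinge on the same key lemma---backward invariance of the fast escaping set under the commuting map, $f^{-1}(\mathcal{A}(g))\subset\mathcal{A}(g)$---and then finish by a minimality/blowing-up step. Your use of the blowing-up property at $z_0\in\mathcal{J}(f)$ is just the unpacked form of the paper's Lemma~\ref{invariant} (the Julia set is the smallest closed completely invariant set of positive capacity), so the two endgames are equivalent.

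Two points deserve correction, however. First, you only ever use the \emph{backward} direction of your ``complete invariance'' lemma: from $f^n(z)\in\mathcal{A}(g)$ you want $z\in\mathcal{A}(g)$. The forward direction $z\in\mathcal{A}(g)\Rightarrow f(z)\in\mathcal{A}(g)$, which you rightly note would require minimum-modulus control on $f$, is never invoked and should be dropped from the plan. Second, and more importantly, your proposed route to the backward direction---comparing $M(r,f)$ with $M(r,g)$ on a sequence of radii---is both vague and unnecessary. The paper (following Bergweiler--Hinkkanen) proves it cleanly via the topological-hull characterisation~\eqref{third}/\eqref{qfirst}: one chooses $R>R_1$ with $f(B(0,R_1))\subset B(0,R)$, uses $g^{n+L}(f(z))=f(g^{n+L}(z))\notin T(g^n(B(0,R)))\supset T(f(g^n(B(0,R_1))))$, and then the elementary inclusion $f(T(X))\subset T(f(X))$ forces $g^{n+L}(z)\notin T(g^n(B(0,R_1)))$. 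No growth comparison or minimum-modulus input is needed; the whole point of introducing $\mathcal{A}$ in \cite{B-H} was precisely that this definition makes the invariance lemma soft. Replace your analytic sketch of the lemma with this argument and the proof is complete and essentially identical to the paper's.
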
  
	Recently, Benini, Rippon and Stallard in \cite{BRS} managed to improve the above theorem and include some cases where $\mathcal{A}$$(f)\not\subset \mathcal{J}$$(f)$ and $\mathcal{A}$$(g)\not\subset \mathcal{J}$$(g)$. In particular they managed to show that two commuting functions will have the same Julia set if all their wandering Fatou components are multiply connected (such components are in the fast escaping set but not the Julia set). However the general case still remains open.\\\\
	In recent years there has been an increasing interest in developing an analogous theory to that of Fatou and Julia for quasiregular maps in $\mathbb{R}^d$. Quasiregular maps are a higher dimensional generalization of the analytic maps in the complex plane. Intuitively quasiregular maps have locally a bounded amount of distortion. This means that while analytic maps in the complex plane map infinitesimally small circles to circles, quasiregular maps send infinitesimally small spheres to ellipsoids of bounded eccentricity (see section 2 for a precise definition). In \cite{B-Nicks} Bergweiler and Nicks defined a Julia set for quasiregular maps of transcendental type (see section 2 for definition) and proved that it has many of the properties of the classical Julia set.\\\\
	There are examples of permutable functions in the quasiregular setting. So the natural thing to ask is: Do permutable quasiregular maps have a similar dynamic behavior? Can we generalize Theorem \ref{theorem berg} to quasiregular maps?\\\\
	In this paper we will adopt the definition of the Julia set from \cite{B-Nicks} and by using its properties we will prove the following generalization of Theorem \ref{theorem berg}.
	\begin{theorem}\label{my theorem}
		Let $f:\mathbb{R}^d\to \mathbb{R}^d$ and $g:\mathbb{R}^d\to \mathbb{R}^d$ be permutable, quasiregular maps of transcendental type such that $\mathcal{A}$$(f)\subset \mathcal{J}$$(f)$ and  $\mathcal{A}$$(g)\subset \mathcal{J}$$(g)$,  then ${\mathcal{J}}(f)={\mathcal{J}}(g)$.
	\end{theorem}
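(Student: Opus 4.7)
I would adapt the strategy of Bergweiler and Hinkkanen \cite{B-H} to the quasiregular setting, using the Julia set machinery of \cite{B-Nicks}. The plan has two parts: produce a common fast escaping point for $f$ and $g$, and then use the blowing-up property of the Julia set to propagate a single common point into a full equality of Julia sets. Symmetry in $f$ and $g$ reduces the theorem to a single inclusion such as $\mathcal{J}(g) \subset \mathcal{J}(f)$.

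The core of the argument is to show that $\mathcal{A}(f) \cap \mathcal{A}(g) \neq \emptyset$. I would start from a point $z_0 \in \mathcal{A}(f)$ (non-empty by the existence results for transcendental type quasiregular maps) and exploit the commutation $f^n \circ g = g \circ f^n$, which yields $f^n(g(z_0)) = g(f^n(z_0))$. The goal is to show $g(z_0) \in \mathcal{A}(f)$, for which I would use quasiregular minimum modulus estimates of Rickman type: along a sequence of radii $r_k \to \infty$ the quantity $\log \min_{|w|=r_k}|g(w)|$ is comparable, up to a factor involving the distortion, to $\log M_g(r_k)$. Combined with the very fast growth of $|f^n(z_0)|$, this should force $|g(f^n(z_0))|$ to outgrow any iterate $M_f^m(R)$ along a suitable subsequence, placing $g(z_0) \in \mathcal{A}(f)$. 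Iterating this under $g$, and using that $g$-iterates of points in $\mathcal{A}(g)$ remain in $\mathcal{A}(g)$, produces a point $\zeta \in \mathcal{A}(f) \cap \mathcal{A}(g) \subset \mathcal{J}(f) \cap \mathcal{J}(g)$.

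Once such $\zeta$ is available, I would take any open neighbourhood $V$ of an arbitrary point $w \in \mathcal{J}(g)$ and apply the blowing-up property for $\mathcal{J}(g)$: the set $\bigcup_n g^n(V)$ covers $\mathbb{R}^d$ except for a small exceptional set (of capacity zero). Since $g^n(\zeta) \in \mathcal{J}(f)$ for every $n$ and $\mathcal{J}(f)$ is closed, tracking the images $g^n(V)$ relative to $\mathcal{J}(f)$ should force $V \cap \mathcal{J}(f) \neq \emptyset$, giving $w \in \mathcal{J}(f)$ and so the desired inclusion. The main obstacle is Step 1: in the entire case Wiman--Valiron theory pins down iterates along maximum modulus sequences, but in the quasiregular setting no such tool exists, and one must argue using only Rickman-type distortion and minimum modulus inequalities, carefully controlling the accumulating $K$-dependent losses at every composition so that the fast growth of $|f^n(z_0)|$ is not destroyed when one post-composes by $g$.
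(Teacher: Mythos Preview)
Your plan has two structural gaps, even before the analytic difficulties you flag.

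First, Step~1 does not produce what it promises. Suppose you succeed in proving forward invariance $g(\mathcal{A}(f))\subset\mathcal{A}(f)$. Then for $z_0\in\mathcal{A}(f)$ you get $g^n(z_0)\in\mathcal{A}(f)$ for all $n$, but there is no mechanism forcing any of these iterates into $\mathcal{A}(g)$: you have shown nothing about the $g$-fast escaping set, and the sentence ``iterating this under $g$ \dots\ produces a point $\zeta\in\mathcal{A}(f)\cap\mathcal{A}(g)$'' is a non sequitur. Second, even granting a point $\zeta\in\mathcal{J}(f)\cap\mathcal{J}(g)$, your Step~2 does not yield $\mathcal{J}(g)\subset\mathcal{J}(f)$. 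The blow-up property says $\bigcup_n g^n(V)$ misses only a capacity-zero set, so some $g^n(V)$ meets $\mathcal{J}(f)$; to pull this back to $V\cap\mathcal{J}(f)\neq\emptyset$ you would need $g^{-1}(\mathcal{J}(f))\subset\mathcal{J}(f)$, i.e.\ backward invariance of $\mathcal{J}(f)$ under $g$, which you never establish. The presence of $\zeta$ does no work in this argument.

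The paper's route avoids both problems by proving complete invariance of $\mathcal{J}(f)$ under $g$ directly, in two halves. Forward invariance $g(\mathcal{J}(f))\subset\mathcal{J}(f)$ comes straight from the capacity definition of the Julia set together with the $K_I$-inequality; no growth estimates are needed. Backward invariance is obtained not for $\mathcal{J}(f)$ but for $\overline{\mathcal{A}(f)}$, via the topological-hull definition of $\mathcal{A}(f)$: one shows $g^{-1}(\mathcal{A}(f))\subset\mathcal{A}(f)$ by a short combinatorial argument using $g(T(X))\subset T(g(X))$ and commutation, with no minimum-modulus input whatsoever. Under the hypothesis $\mathcal{A}(f)\subset\mathcal{J}(f)$, together with the always-valid inclusion $\mathcal{J}(f)\subset\partial\mathcal{A}(f)$, one has $\mathcal{J}(f)=\overline{\mathcal{A}(f)}$, so this closed set is completely $g$-invariant. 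Since $\mathcal{A}(f)$ has unbounded components it has positive capacity, and then the minimality lemma (any closed, completely $f$-invariant set of positive capacity contains $\mathcal{J}(f)$, applied with $f$ replaced by $g$) gives $\mathcal{J}(g)\subset\mathcal{J}(f)$. The Rickman-type minimum-modulus estimates you worry about are not needed at all; the correct replacement for Wiman--Valiron here is the topological-hull formulation of $\mathcal{A}(f)$.
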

	The more general version of this theorem would be the analogous result to that of Benini-Rippon-Stallard. Unfortunately their proof relies heavily on the properties of the hyperbolic metric under holomorphic maps. Such an approach does not work in higher dimensions. 
	
	Another interesting question to ask is whether permutable quasiregular maps of polynomial type must have the same Julia set. However, this problem seems harder and is still open. On the other hand if $f,g$ are permutable, uniformly quasiregular maps of polynomial type (see section 2 for the definition) then ${\mathcal{J}}(f)={\mathcal{J}}(g)$ and the proof is almost the same as the one for rational functions in the complex plane which can be found in  \cite{Baker2}.
	Moreover, we can generalize a result of Baker \cite[Lemma 4.5]{Baker2} which deals with a special case and can be applied to quasiregular maps of polynomial or transcendental type.
	\begin{theorem}\label{baker generalization}
		Let $f:\mathbb{R}^d\to \mathbb{R}^d$ and $g:\mathbb{R}^d\to \mathbb{R}^d$ be permutable quasiregular maps. Assume that $\capac{\mathcal{J}}(f)>0, \hspace{1mm}\capac{\mathcal{J}}(g)>0 $ and $g=\phi\circ f$, where $\phi:\mathbb{R}^d\to\mathbb{R}^d$ is a quasiconformal map. Then ${\mathcal{J}}(f)={\mathcal{J}}(g)$.
	\end{theorem}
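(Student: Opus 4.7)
The strategy is to show first that the quasiconformal map $\phi$ itself commutes with $f$, then deduce that $\mathcal{J}(f)$ is completely invariant under $g$, and finally invoke a minimality property of the quasiregular Julia set.

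Substituting $g = \phi \circ f$ into the permutability relation $f \circ g = g \circ f$ yields $f \circ \phi \circ f = \phi \circ f^2$, so $f \circ \phi$ and $\phi \circ f$ agree on $f(\mathbb{R}^d)$. By a Picard-type theorem for quasiregular maps in the transcendental case (or surjectivity up to a polar set in the polynomial case), $f(\mathbb{R}^d)$ is dense in $\mathbb{R}^d$, so continuity upgrades this to $f \circ \phi = \phi \circ f$ everywhere. Since $\phi$ is then a self-homeomorphism commuting with every iterate $f^n$, the blow-up characterization of $\mathcal{J}(f)$ from \cite{B-Nicks} transfers cleanly: for a neighbourhood $U$ of a point $x \in \mathcal{J}(f)$, the complement of $\bigcup_k f^k(\phi(U))$ is the $\phi$-image of the complement of $\bigcup_k f^k(U)$, and the two sets have the same (finite) cardinality. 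Hence $\phi(\mathcal{J}(f)) = \mathcal{J}(f)$, and combining this with the $f$-invariance of $\mathcal{J}(f)$ gives
$$g^{-1}(\mathcal{J}(f)) = f^{-1}(\phi^{-1}(\mathcal{J}(f))) = f^{-1}(\mathcal{J}(f)) = \mathcal{J}(f),$$
so that $\mathcal{J}(f)$ is a closed, completely $g$-invariant set of positive capacity.

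The decisive, and I expect most delicate, step is to conclude from here that $\mathcal{J}(g) \subseteq \mathcal{J}(f)$. The classical analogue rests on $\mathcal{J}(g)$ being the smallest closed, completely invariant set containing more than finitely many points. In the quasiregular setting the natural capacity-theoretic analogue states that $\mathcal{J}(g)$ is contained in every closed, completely $g$-invariant set of positive capacity; this is precisely where the hypothesis $\capac \mathcal{J}(g) > 0$ enters, ruling out the degenerate case where $\mathcal{J}(g)$ itself might be a small exceptional set. Granting this minimality, $\mathcal{J}(g) \subseteq \mathcal{J}(f)$ follows at once. The reverse inclusion is obtained symmetrically: writing $f = \phi^{-1} \circ g$ with $\phi^{-1}$ still quasiconformal, and using $\capac \mathcal{J}(f) > 0$, the same argument yields $\mathcal{J}(f) \subseteq \mathcal{J}(g)$, and we conclude $\mathcal{J}(f) = \mathcal{J}(g)$.
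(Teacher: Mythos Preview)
Your route through the commutation of $\phi$ with $f$ is a legitimate and arguably cleaner alternative to the paper's argument. The paper never isolates $f\circ\phi=\phi\circ f$; instead it works directly with the definition of $\mathcal{J}(f)$, showing for a given $x_0\in g^{-1}(\mathcal{J}(f))$ that every neighbourhood $V$ of $x_0$ satisfies the capacity blow-up condition by passing through $U=\phi(f(V))$ and the identity $f^n(\phi^{-1}(U))=\phi^{-1}(f^n(U))$ (deduced from $f\circ(\phi\circ f)=(\phi\circ f)\circ f$ without ever cancelling the inner $f$). Your observation that $f\circ\phi=\phi\circ f$ on $f(\mathbb{R}^d)$, hence everywhere by density and continuity, packages this more transparently and immediately gives $\phi(\mathcal{J}(f))=\mathcal{J}(f)$.

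However, there is a genuine slip in the execution. You write that the complement of $\bigcup_k f^k(\phi(U))$ and the complement of $\bigcup_k f^k(U)$ ``have the same (finite) cardinality''. This is not the definition in the quasiregular setting: membership in $\mathcal{J}(f)$ requires $\capac\bigl(\mathbb{R}^d\setminus\bigcup_k f^k(U)\bigr)=0$, and that complement need not be finite. What you actually need is that the quasiconformal homeomorphism $\phi$ maps sets of zero conformal capacity to sets of zero conformal capacity, which follows from the $K_I$-inequality (Theorem~\ref{KI ineq}) applied to $\phi$ and to $\phi^{-1}$. With that correction the step goes through.

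A smaller point: the hypothesis $\capac\mathcal{J}(f)>0$ is what lets you apply Lemma~\ref{invariant} (with $K=\mathcal{J}(f)$) to obtain $\mathcal{J}(g)\subset\mathcal{J}(f)$; the hypothesis $\capac\mathcal{J}(g)>0$ is used only in the symmetric step. The lemma itself places no positivity assumption on $\mathcal{J}(g)$, so your remark about ``ruling out the degenerate case where $\mathcal{J}(g)$ itself might be a small exceptional set'' misattributes where each hypothesis enters.
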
 
	Note here that in the above theorem we assume that the capacity of the Julia sets of our functions is positive. It is conjectured that this always holds when the Julia set is infinite and thus we do not actually need this assumption. However, we can prove that this condition can be dropped if $g$ has a very specific form. Namely the following holds.
	\begin{theorem}\label{baker generalization 2}
		Let $f:\mathbb{R}^d\to \mathbb{R}^d$ and $g:\mathbb{R}^d\to \mathbb{R}^d$ be permutable quasiregular maps of transcendental type. Assume that $g=af+c$, where $a$ is a positive real number and $c$ is a constant in $\mathbb{R}^d$. Then ${\mathcal{J}}(f)={\mathcal{J}}(g)$.
	\end{theorem}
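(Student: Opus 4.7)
The plan is to exploit the very specific form of $g$, namely that $g=\phi\circ f$ where $\phi(y)=ay+c$ is a sense-preserving similarity of $\mathbb{R}^d$, hence a $1$-quasiconformal self-homeomorphism.

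The first step is to upgrade the commutation $f\circ g = g\circ f$ to the stronger identity $f\circ \phi = \phi\circ f$. Substituting $g=\phi\circ f$ into $f(g(x))=g(f(x))$ gives $(f\circ\phi)(f(x)) = (\phi\circ f)(f(x))$ for every $x\in\mathbb{R}^d$, so the continuous maps $f\circ\phi$ and $\phi\circ f$ agree on the image $f(\mathbb{R}^d)$. Rickman's version of Picard's theorem for quasiregular maps of transcendental type implies that $f(\mathbb{R}^d)$ omits only a finite subset of $\mathbb{R}^d$, hence is dense, and continuity forces the equality everywhere. A direct computation then also yields $g\circ\phi = \phi\circ g$.

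Since $\phi$ is a self-homeomorphism commuting with both $f$ and $g$, the relations $\phi\circ f^n\circ\phi^{-1}=f^n$ and $\phi\circ g^n\circ\phi^{-1}=g^n$ hold for all $n$, so $\phi({\mathcal{J}}(f))={\mathcal{J}}(f)$ and $\phi({\mathcal{J}}(g))={\mathcal{J}}(g)$. Combining this with $g=\phi\circ f$ and the complete $f$-invariance of ${\mathcal{J}}(f)$, I would verify that ${\mathcal{J}}(f)$ is completely invariant under $g$:
\begin{align*}
g({\mathcal{J}}(f)) &= \phi(f({\mathcal{J}}(f))) \subseteq \phi({\mathcal{J}}(f)) = {\mathcal{J}}(f), \\
g^{-1}({\mathcal{J}}(f)) &= f^{-1}(\phi^{-1}({\mathcal{J}}(f))) = f^{-1}({\mathcal{J}}(f)) = {\mathcal{J}}(f).
\end{align*}
The symmetric computation, using $f=\phi^{-1}\circ g$, shows that ${\mathcal{J}}(g)$ is completely invariant under $f$.

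Finally, I would invoke the blow-up property of the Bergweiler--Nicks Julia set in its backward-orbit form: for every point $y_0$ outside a certain finite exceptional set associated to $f$, one has ${\mathcal{J}}(f)\subseteq\overline{\bigcup_{n\geq 0}f^{-n}(y_0)}$. Since ${\mathcal{J}}(g)$ is uncountable and perfect for $g$ of transcendental type, I may pick $z_0\in{\mathcal{J}}(g)$ avoiding this finite set; the closed, $f$-backward-invariant set ${\mathcal{J}}(g)$ then contains the entire backward orbit $\bigcup_n f^{-n}(z_0)$, giving ${\mathcal{J}}(f)\subseteq{\mathcal{J}}(g)$. Swapping the roles of $f$ and $g$ yields the opposite inclusion and completes the proof. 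The principal obstacle, relative to Theorem~\ref{baker generalization}, is the absence of any assumption $\capac{\mathcal{J}}(f)>0$ or $\capac{\mathcal{J}}(g)>0$; the work-around is provided precisely by the affine form of $\phi$, which makes $\phi$ itself commute with $f$ and thereby allows the complete $g$-invariance of ${\mathcal{J}}(f)$ to be read off directly from the complete $f$-invariance of ${\mathcal{J}}(f)$, bypassing any capacity-based comparison.
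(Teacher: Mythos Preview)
Your Steps 1--4 are correct and pleasant: the identity $f\circ\phi=\phi\circ f$ really does follow by density of $f(\mathbb{R}^d)$, and from it the complete $g$-invariance of ${\mathcal{J}}(f)$ (and the complete $f$-invariance of ${\mathcal{J}}(g)$) drops out cleanly. This part is essentially a streamlined version of the complete-invariance argument in the proof of Theorem~\ref{baker generalization}.

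The gap is in Step~5. The ``backward-orbit form of the blow-up property'' you invoke --- that ${\mathcal{J}}(f)\subseteq\overline{O_f^-(y_0)}$ for every $y_0$ outside a finite set --- is \emph{not} known for general quasiregular maps of transcendental type. The Bergweiler--Nicks definition only guarantees that for any neighbourhood $U$ of a Julia point, the complement $\mathbb{R}^d\setminus\bigcup_n f^n(U)$ has \emph{capacity zero}; it need not be finite. Concretely, if $y_0\notin\bigcup_n f^n(U)$ then one checks $\overline{O_f^-(y_0)}\subseteq\mathbb{R}^d\setminus\bigcup_n f^n(U)$, but a closed countable (even backward-invariant) set can perfectly well have capacity zero, so no contradiction results. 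This is exactly why Lemma~\ref{invariant} carries the hypothesis $\capac K>0$, and why the paper states as a \emph{conjecture} that $\capac{\mathcal{J}}(f)>0$ always holds: were your backward-orbit density available, Theorem~\ref{baker generalization} would already hold without its capacity assumptions, and Theorem~\ref{baker generalization 2} would be a trivial corollary.

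The paper's route is therefore genuinely different from yours. It first uses a maximum-modulus growth argument (Lemma~\ref{rotation}) to force $a=1$, so that $g=f+c$. It then shows (Lemma~\ref{pits}) that a transcendental $f$ commuting with $f+c$ cannot have the \emph{pits effect}: along a half-line through $c$ one has $f(y+nc)=f(y)+nc$, which produces large annuli on which $|f|$ is only polynomially large, violating the pits condition. By a theorem of Bergweiler--Nicks, absence of the pits effect gives $\capac{\mathcal{J}}(f)>0$ (and likewise for $g=f+c$), and then Theorem~\ref{baker generalization} applies. In short, the paper does not sidestep the capacity hypothesis --- it \emph{verifies} it from the special form $g=af+c$, and that verification is the substance of the proof.
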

	We can also consider the case where $f,g$ are quasimeromorphic (see section 2 for the definition). Recently in \cite{WARREN2018} Warren defined the Julia set for quasimeromorphic maps of transcendental type. So it is interesting to ask whether something similar with Theorem~\ref{my theorem} holds in this case.  For quasimeromorphic maps we say that they are permutable if $f\circ g=g\circ f$ holds for points in $\mathbb{R}^d$ where both sides are defined.  In order to state our theorem in this setting let us introduce the concept of the \textit{backward orbit} of a point. Let $x\in\overline{\mathbb{R}^d}=\mathbb{R}^d\cup\{\infty\}$ then we define the backward orbit as \[O_f^{-}(x)=\bigcup_{n=0}^{\infty}f^{-n}(x).\]

	When studying the dynamics of meromorphic functions we usually divide them in two classes. The first one, and the most general one, is \[\mathcal{M}:=\{f: f \hspace{2mm}\text{is transcendental meromorphic and}\hspace{2mm}\card (O_f^{-}(\infty))=\infty\},\] while the other one is \[\mathcal{P}:=\{f: f \hspace{2mm}\text{is transcendental meromorphic and}\hspace{2mm}\card (O_f^{-}(\infty))<\infty\}.\] A typical example of a map in class $\mathcal{P}$ is $\frac{e^z}{z}$. The iteration theory and the methods of proof in those two classes are often quite different with class $\mathcal{P}$ being often closer to the class of transcendental entire functions instead. The situation is similar for quasimeromorphic maps. For functions in the analogous class $\mathcal{M}$ in higher dimensions we prove the following.

	\begin{theorem}\label{theorem2}
		Let $f:\mathbb{R}^d\to \overline{\mathbb{R}^d}$ and $g:\mathbb{R}^d\to \overline{\mathbb{R}^d}$ be permutable, quasimeromorphic maps of transcendental type with $\card (O_f^{-}(\infty))=\infty=\card(O_g^{-}(\infty))$, then  ${\mathcal{J}}(f)={\mathcal{J}}(g)$.
	\end{theorem}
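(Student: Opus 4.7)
The plan is to adapt the classical proof for complex meromorphic functions in class $\mathcal{M}$, appealing to the quasimeromorphic analogues established by Warren \cite{WARREN2018}: for any quasimeromorphic map $h$ of transcendental type with $\card (O_h^{-}(\infty))=\infty$, the Julia set satisfies $\mathcal{J}(h) = \overline{O_h^{-}(\infty)}$, contains $\infty$ (since $\infty$ is an essential singularity), and is perfect.

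By the symmetry of the hypotheses on $f$ and $g$, it suffices to prove $\mathcal{J}(f) \subseteq \mathcal{J}(g)$. Since $\mathcal{J}(g)$ is closed, Warren's density result reduces this to showing $O_f^{-}(\infty) \subseteq \mathcal{J}(g)$. So fix $a$ with $f^m(a)=\infty$ for some minimal $m\geq 1$, and suppose for contradiction that $a\in\mathcal{F}(g)$. Then $\{g^k\}$ is equicontinuous in the chordal metric on some open neighbourhood $U$ of $a$.

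The key step is to exploit the pole of $f^m$ at $a$ to transfer this equicontinuity to a neighbourhood of $\infty$. Since $f^m$ is an open quasimeromorphic map with $f^m(a)=\infty$, I can shrink $U$ to some $V\subseteq U$ so that $W:=f^m(V)$ is an open neighbourhood of $\infty$ and $f^m \colon V\setminus\{a\}\to W\setminus\{\infty\}$ is a proper branched covering. Iterating the commutation relation yields the identity $g^k\circ f^m = f^m\circ g^k$ on $V$ outside a discrete exceptional set arising from poles and prepoles of the iterates. Given $y\in W\setminus\{\infty\}$ and $y'$ nearby, I pick preimages $z,z'\in V\setminus\{a\}$ through a local branch of the covering; such branches are Hölder continuous in the chordal metric by standard quasiregular regularity. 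The equicontinuity of $\{g^k\}$ on $V$ then controls the chordal distance between $g^k(z)$ and $g^k(z')$, and Hölder continuity of $f^m$ controls the chordal distance between $f^m(g^k(z))=g^k(y)$ and $f^m(g^k(z'))=g^k(y')$, uniformly in $k$. This shows $\{g^k\}$ is equicontinuous on $W\setminus\{\infty\}$, so some punctured neighbourhood of $\infty$ lies in $\mathcal{F}(g)$, making $\infty$ an isolated point of $\mathcal{J}(g)$ and contradicting perfectness.

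The main obstacle, as I see it, is the careful handling of the branched covering $f^m\colon V\setminus\{a\}\to W\setminus\{\infty\}$: one must construct local inverse branches with uniform Hölder estimates on a full punctured neighbourhood of $\infty$, and the discrete exceptional set on which commutation or regularity fails must be bypassed. This is the one place where quasiregular structure (in particular the distortion and branch analysis going back to Rickman and Väisälä) is genuinely indispensable, and it replaces the hyperbolic-metric techniques used by Benini--Rippon--Stallard in the classical setting.
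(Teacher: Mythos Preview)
Your argument rests on the claim that if $a\in\mathcal{F}(g)$ then the family $\{g^k\}$ is equicontinuous (in the chordal metric) on a neighbourhood of $a$. In the quasimeromorphic setting this is not available: the Julia set of a transcendental-type quasimeromorphic map is defined by Warren via a blow-up property together with $\overline{O_g^{-}(\infty)}$, and the complement is \emph{not} known to be a set of normality. Since $g$ is not assumed uniformly quasiregular, the dilatations $K(g^k)$ may be unbounded, and no Montel-type theorem yields equicontinuity on $\mathcal{F}(g)$. Your subsequent transfer-of-equicontinuity argument through the branched covering $f^m$ therefore never gets off the ground, and the contradiction with perfectness of $\mathcal{J}(g)$ is unreached.

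The paper avoids all of this with a much more elementary route: it proves the \emph{set equality} $O_f^{-}(\infty)=O_g^{-}(\infty)$ directly. The key observation is that permutable $f$ and $g$ must have exactly the same poles, since a pole of $f$ that is not a pole of $g$ would make $g\circ f$ essentially singular there while $f\circ g$ is not. Given $x_0$ with $f^n(x_0)=\infty$, one then argues inductively: $f^{n-1}(x_0)$ is a pole of $f$, hence of $g$; permutability gives $f(g(f^{n-2}(x_0)))=\infty$, so $g(f^{n-2}(x_0))$ is again a common pole; after $n$ steps one obtains $g^n(x_0)=\infty$. Taking closures and using $\mathcal{J}(h)=\overline{O_h^{-}(\infty)}$ finishes the proof. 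No normality, no distortion estimates, no branched-covering analysis is needed.
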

	However, as is often the case, the method used to prove this theorem cannot be used in class $\mathcal{P}$. Let us also note here that it is highly non trivial to construct such maps in higher dimensions and until recently (see \cite{Warren2019}) this had not been done.

	It is also worth mentioning here that Baker in \cite[Theorem 1 p. 244]{baker1} proved that given an entire function $f$, which is either transcendental or polynomial of degree at least two, then there are only countably many entire functions $g$ that are permutable with $f$. We will give examples which show that this theorem cannot hold in the quasiregular case. To be more specific, by modifying an example given in \cite{B-H}, we are able to prove the following result.
	\begin{theorem}\label{examples}
		There exists an entire transcendental map $f$ that is permutable with uncountably many quasiregular maps $g:\mathbb{C}\to \mathbb{C}$. 
	\end{theorem}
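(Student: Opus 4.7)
The plan is to start from a transcendental entire function $f$ already admitting a countable family of entire commuting maps coming from a translation symmetry, and to use quasiconformal flexibility to fatten this family into an uncountable one. Baker's rigidity theorem (which forces the entire commutant of $f$ to be countable) relies crucially on the rigidity of holomorphic maps; it breaks in the quasiregular class, because once one allows cut-and-paste with quasiconformal maps one obtains a continuous parameter of pairwise distinct permutable maps.

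Concretely, I would take the Bergweiler--Hinkkanen example of a transcendental entire $f$ satisfying a translation equivariance $f(z+c) = f(z) + c$ for some nonzero period $c$ (a model instance being $f(z) = z + e^z$ with $c = 2\pi i$), so that the translations $\tau_n(z) = z + nc$ for $n \in \mathbb{Z}$ automatically satisfy $f \circ \tau_n = \tau_n \circ f$. To enlarge this family, I would locate a wandering open set $U \subset \mathbb{C}$ whose forward iterates $U, f(U), f^2(U), \dots$ are pairwise disjoint and also disjoint from every translate $\tau_n(U)$; such a $U$ can be placed in a neighbourhood of a suitable escaping point. For each $t$ in an uncountable parameter set $T$ I would choose a quasiconformal self-map $\eta_t$ of $U$, equal to the identity near $\partial U$ and varying non-trivially (and distinctly in $t$) in the interior, and define a candidate $g_t : \mathbb{C} \to \mathbb{C}$ by setting $g_t = \eta_t$ on $U$, $g_t = \mathrm{id}$ off the grand orbit of $U$, and propagating $g_t$ along the forward iterates $f^n(U)$ using the commutation rule $g_t \circ f = f \circ g_t$. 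Disjointness of the $f^n(U)$ makes the definition unambiguous, and the local conformality of $f$ away from its critical set keeps each $g_t$ quasiregular.

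The hard part will be to verify that $g_t$ is continuous (hence genuinely quasiregular) across the boundary of the grand orbit of $U$ and that the commutation relation $f \circ g_t = g_t \circ f$ holds globally, not just on the pieces where it was imposed. This forces $U$ to be chosen inside a well-behaved Fatou component of $f$, for instance a wandering domain or a suitable subregion of a Baker domain, where the grand orbit has controlled closure and where $f$ restricts to an injective map on $U$ so that the forward propagation is single-valued. Once $U$ is chosen carefully and $\eta_t$ is compactly supported in $U$, continuity is automatic from the boundary conditions, and global commutation with $f$ on the complement of the grand orbit follows from $g_t = \mathrm{id}$ there. Different parameters $t$ produce different $g_t$, because the $\eta_t$ already differ on $U$, which yields the desired uncountable family.
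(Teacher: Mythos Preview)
Your construction has a genuine gap in the treatment of the \emph{backward} orbit of $U$. You define $g_t=\eta_t$ on $U$, propagate forward to the sets $f^n(U)$ via the rule $g_t\circ f=f\circ g_t$, and set $g_t=\mathrm{id}$ elsewhere. But the grand orbit of $U$ also contains every preimage component $f^{-m}(f^n(U))$, and for $z\in f^{-1}(U)\setminus U$ with $g_t(z)=z$ you get
\[
g_t(f(z))=\eta_t(f(z))\neq f(z)=f(g_t(z))
\]
whenever $f(z)$ lies in the support of $\eta_t$. So commutation already fails one step back. Repairing this forces you to pull $\eta_t$ back along \emph{every} branch of $f^{-m}$, and then to show that the resulting map extends continuously and quasiregularly across the closure of the full grand orbit, which for a transcendental $f$ typically accumulates on all of $\mathcal J(f)$. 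Nothing in your sketch controls this; compact support of $\eta_t$ in $U$ handles only the boundary of each individual pullback, not accumulation of infinitely many of them. You also need the specific $f$ (you suggest $z+e^z$) to possess a wandering domain or a subregion of a Baker domain with the required disjointness, and you have not established this.

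The paper sidesteps all of these difficulties by a different mechanism. It takes $f(z)=c(e^{z^2}-1)$, which has a \emph{superattracting} fixed point at $0$ and is polynomial-like, so that a B\"ottcher coordinate $\phi$ conjugates $f$ on the immediate basin $A$ to $z\mapsto z^2$ on the unit disc and extends quasiconformally across $\partial A$. One then uses the explicit one-parameter family $G_m(z)=|z|^{m-1}z$ of quasiconformal maps of the disc that commute with $z^2$ and satisfy $G_m=\mathrm{id}$ on the unit circle, setting $g=\phi^{-1}\circ G_m\circ\phi$ on $A$. Since every Fatou component eventually lands in $A$, one propagates $g$ \emph{backward} by $g(z)=f^{-n}(g(f^n(z)))$ with the appropriate branch, covering the whole Fatou set; the normalisation $G_m|_{|z|=1}=\mathrm{id}$ together with the quasiconformal extension of $\phi$ makes $g=\mathrm{id}$ on $\mathcal J(f)$ a continuous, quasiregular extension. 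The attracting dynamics, the explicit model $z\mapsto z^2$, and the boundary condition do precisely the gluing work that your wandering-set construction leaves unaddressed.
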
  
	
	A natural thing to ask here is the following question:   Is it possible for a polynomial to commute with an entire transcendental function? The answer to that was given independently by Baker in \cite{baker1958} and Iyer in \cite{iyer}.
	\begin{theorem}[Baker, \cite{baker1958} and Iyer, \cite{iyer}]
		Let $f,g :\mathbb{C}\to \mathbb{C}$ be permutable functions. Assume that $g$ is a polynomial of degree $n>1$. Then $f$ is also a polynomial.
	\end{theorem}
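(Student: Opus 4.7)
I argue by contradiction: suppose $f$ is transcendental entire. The plan is to compare the maximum-modulus growth of $f\circ g$ and $g\circ f$---which are equal by permutability---and derive that $\log M(r,f) = O(\log r)$, forcing $f$ to be a polynomial. Write $g(z) = c z^n + \cdots$ with $c \ne 0$.

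On the one hand, since $|g(w)| \sim |c||w|^n$ for $|w|$ large and $M(r,f) \to \infty$ (as $f$ is nonconstant entire), I get immediately
\[ M(r, g\circ f) \sim |c|\, M(r,f)^n \qquad \text{as } r \to \infty. \]
On the other hand, the upper bound $M(r, f\circ g) \le M(M(r,g), f)$ is trivial, while the matching lower bound $M(r, f\circ g) \ge M(|c| r^n(1-o(1)), f)$ follows by applying Rouch\'e's theorem to the equation $g(z) - w = 0$: the image $g(\{|z|\le r\})$ covers the disk $\{|w|\le |c|r^n - O(r^{n-1})\}$ precisely because $g$ is a polynomial of degree $n$. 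Equating $M(r, f\circ g) = M(r, g\circ f)$ and taking logarithms produces the key functional relation
\[ \log M(|c| r^n, f) = n \log M(r,f) + O(1) \qquad \text{as } r \to \infty. \]

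Setting $\mu(r) := \log M(r,f)$ and inverting $R = |c| r^n$, this becomes $\mu(R) \le n\, \mu\bigl(C R^{1/n}\bigr) + O(1)$ for a suitable constant $C>0$. Iterating $k$ times yields $\mu(R) \le n^k \mu(R_k) + O(n^k)$, where the nested arguments $R_k$ tend to a finite positive limit as $k \to \infty$. Choosing $k \sim \log\log R / \log n$ so that $n^k \asymp \log R$ while $R_k$ stays in a fixed compact set on which $\mu$ is bounded, I conclude $\mu(R) = O(\log R)$. Hence $M(r,f)$ is polynomially bounded, and the standard fact that entire functions of polynomial growth are polynomials forces $f$ to be a polynomial---contradicting the assumption that $f$ is transcendental.

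The main obstacle is the lower bound $M(r, f\circ g) \ge M(|c|r^n(1-o(1)), f)$: it relies crucially on $g$ being polynomial, so that the image $g(\{|z|\le r\})$ essentially fills the disk of radius $M(r,g)$. If one only knew $|g(z)| \le M(r,g)$ (as would be the case for a general entire $g$), the values of $f\circ g$ could be concentrated on a much smaller set and the two-sided comparison would collapse. This is precisely where the polynomial nature of $g$ enters in an essential way, and the rest of the argument is a relatively routine iteration of the resulting functional inequality.
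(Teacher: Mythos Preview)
The paper does not itself prove this statement; it cites it as due to Baker and Iyer and then establishes the quasiregular analogue (Theorem~\ref{theorem1}). Your argument is correct and matches that proof almost step for step: bound $M(r,g\circ f)$ above by a constant times $M(r,f)^{n}$ using the polynomial growth of $g$, bound $M(r,f\circ g)$ below by $M(c\,r^{n},f)$ using that $g(\{|z|\le r\})$ covers a large disk, equate the two by permutability, and deduce that $\log M(Qr,f)/\log M(r,f)$ stays bounded for some fixed $Q>1$.

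The only difference is in the finish, and it is cosmetic. You iterate the inequality $\mu(R)\le n\,\mu(CR^{1/n})+O(1)$ by hand for $k\sim\log\log R/\log n$ steps to force $\log M(R,f)=O(\log R)$, hence $f$ polynomial. The paper instead stops at the bounded-ratio statement and invokes \cite[Lemma~3.3]{Berg2006}, which asserts precisely that such a bound fails for transcendental-type maps; your iteration is essentially what underlies that lemma in the entire case. Likewise, your Rouch\'e step for the lower bound is the holomorphic special case of the paper's P\'olya-type Lemma~\ref{lemmasix}. So the two routes are the same argument packaged slightly differently; the paper's version has the advantage of transferring verbatim to the quasiregular setting.
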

	This means that there is no polynomial commuting with a transcendental entire function unless the polynomial has degree one. This result has been generalized to meromorphic functions by Osborne and Sixsmith in \cite{Osborne2016a}.
	
	In the quasiregular setting we are able to prove the following theorem which can be seen as the analogy to that of Baker and Iyer in higher dimensions.
	
	\begin{theorem}\label{theorem1}
		Let $f:\mathbb{R}^d\to\mathbb{R}^d$ and $g:\mathbb{R}^d\to\mathbb{R}^d$ be permutable quasiregular maps. If $g$ is of polynomial type and $\deg g> K(g)$ then $f$ is also of polynomial type.
	\end{theorem}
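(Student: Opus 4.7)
The plan is to adapt the classical Baker--Iyer strategy to the quasiregular setting. I argue by contradiction: assume $f$ is of transcendental type, and aim to derive a growth estimate for the maximum modulus $M(r,f) := \max_{|x|=r}|f(x)|$ that is incompatible with transcendentality.

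The first step is to use the hypothesis $\deg g > K(g)$ to establish strong super-attracting behaviour of $g$ at infinity: there exist $R>0$ and constants $c, C>0$, $\alpha>1$, such that
\[ c|x|^\alpha \leq |g(x)| \leq C|x|^\alpha \qquad (|x|\geq R). \]
The upper bound is a standard growth estimate for polynomial-type quasiregular maps; the matching lower bound on $m(r,g) := \min_{|x|=r}|g(x)|$ with the same exponent $\alpha>1$ is precisely what the condition $\deg g > K(g)$ buys, via a quasiregular Harnack-type estimate controlling the ratio $M(r,g)/m(r,g)$. In particular, the basin of infinity contains $\{|x|>R\}$ and $\mathcal{J}(g)$ is compact.

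Next I exploit the permutability $f\circ g = g\circ f$, which forces $M(r, f\circ g) = M(r, g\circ f)$. Combining the maximum modulus principle (valid for quasiregular maps since they are open and discrete) with the properness of the polynomial-type map $g$ (which gives $\overline{B(0,m(r,g))} \subseteq g(\overline{B(0,r)}) \subseteq \overline{B(0,M(r,g))}$), the first-step estimates yield the core functional inequality
\[ M(cr^\alpha, f) \leq M(r, f\circ g) = M(r, g\circ f) \leq C\,M(r, f)^\alpha \qquad (r \geq R). \]
Setting $T(r) := \log M(r,f)$, this reads $T(cr^\alpha) \leq \alpha T(r) + O(1)$. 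Iterating $k$ times and choosing $k \asymp (\log\log r)/\log\alpha$ so that the innermost argument stays bounded, one extracts $T(r) = O(\log r)$; that is, $M(r,f)$ grows at most polynomially. But $f$ being of transcendental type means $T(r)/\log r \to \infty$, contradicting the logarithmic bound. Hence $f$ must be of polynomial type.

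The main obstacle is Step~1: establishing the clean two-sided power-law growth $|g(x)| \asymp |x|^\alpha$ with the same exponent $\alpha > 1$ on both sides. In the classical case one has $|g(z)| \sim |a_n||z|^n$ immediately from the polynomial expansion, but quasiregularity introduces distortion, so the argument must combine the growth theory of polynomial-type quasiregular maps with Harnack-type control of $M(r,g)/m(r,g)$. The condition $\deg g > K(g)$ plays a double role here: it guarantees $\alpha > 1$, and it is sharp enough that the iteration in the last step yields a genuinely logarithmic bound on $T(r)$, rather than a polylogarithmic one which would fail to contradict transcendentality.
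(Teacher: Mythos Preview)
Your overall architecture (contradiction via comparing $M(r,f\circ g)$ and $M(r,g\circ f)$) matches the paper's, but the execution contains a genuine gap in Step~1 that breaks the final iteration.

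\textbf{The gap.} You assert two-sided growth $c|x|^\alpha \leq |g(x)| \leq C|x|^\alpha$ with the \emph{same} exponent $\alpha$, attributing this to a ``quasiregular Harnack-type estimate'' for $M(r,g)/m(r,g)$. No such estimate with matching exponents is available in general. The standard growth result for polynomial-type quasiregular maps (Rickman, Theorem~III.4.7) gives
\[
A|x|^{n_1}\le |g(x)|\le B|x|^{n_2},\qquad n_1=\Bigl(\tfrac{\deg g}{K_I(g)}\Bigr)^{1/(d-1)},\quad n_2=\bigl(\deg g\cdot K_O(g)\bigr)^{1/(d-1)},
\]
and one has $n_1<n_2$ as soon as $K(g)>1$. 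The hypothesis $\deg g>K(g)$ ensures only that $n_1>1$; it does \emph{not} force $n_1=n_2$. With distinct exponents your functional inequality becomes $T(c\,r^{n_1})\le n_2\,T(r)+O(1)$, and iterating $k\asymp(\log\log r)/\log n_1$ times now yields
\[
T(r)=O\bigl((\log r)^{\log n_2/\log n_1}\bigr),
\]
a polylogarithmic bound with exponent $>1$. As you yourself note in your final paragraph, this is perfectly compatible with $T(r)/\log r\to\infty$, so no contradiction with transcendentality follows.

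\textbf{How the paper closes the gap.} The paper accepts the mismatched exponents $n_1<n_2$ and never attempts to iterate down to $T(r)=O(\log r)$. Instead, since $n_1>1$, for large $r$ one has $cA(r/2)^{n_1}\ge Qr$ with some fixed $Q>1$, so the functional inequality immediately gives
\[
\frac{\log M(Qr,f)}{\log M(r,f)}\le n_2+o(1),
\]
hence $\limsup_{r\to\infty}\log M(Qr,f)/\log M(r,f)<\infty$. The contradiction is then with a known growth lemma for transcendental-type quasiregular maps (\cite[Lemma~3.3]{Berg2006}), which says precisely that this ratio is unbounded. This is a strictly weaker conclusion than $T(r)=O(\log r)$, but it is all that the unequal-exponent inequality can deliver, and it suffices. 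Your properness argument for the lower bound $M(m(r,g),f)\le M(r,f\circ g)$ is fine and could replace the paper's P\'olya-type Lemma~\ref{lemmasix}; the failure is entirely in the endgame.
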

	Thus a polynomial type quasiregular map can commute with a transcendental type map only if its degree is less than or equal to its dilatation.

	\textbf{Remark:} The condition $\deg g> K(g)$ for $g$ a polynomial type quasiregular map appears naturally in quasiregular dynamics, see for example \cite{Berg1,berg2013,FLETCHER2010}. It plays the same role as the condition $\deg g\geq2$ in holomorphic dynamics, when $g$ is a polynomial.\\

	The structure of the rest of this paper is as follows. Section 2 contains background material on quasiregular maps and capacity. In section 3 we prove Theorems \ref{my theorem} and \ref{baker generalization}. Section 4 contains the proof of Theorem \ref{baker generalization 2} while in section 5  we prove Theorem \ref{examples}. Lastly, in section 6 we prove Theorems \ref{theorem2}, \ref{theorem1} and in section 7 we provide some examples that help illustrate our theorems.  
	\begin{remark3}
		I would like to thank my supervisor Dr. Daniel Nicks for his patient guidance and for various valuable suggestions that greatly improved this paper. I am also grateful to Dr. Dave Sixsmith and the anonymous referee for their questions which led to Theorem \ref{theorem1}. Moreover, I would  like to thank the anonymous referee for his comments and suggestions which helped improve the exposition. 
	\end{remark3} 
	\section{Background on quasiregular maps and capacity}
	Here we will give a brief overview of the properties of quasiregular maps that we will need. For a more detailed treatment of quasiregular maps we refer to \cite{Rickman, vuorinen}. For a survey in the iteration of such maps we refer to \cite{Berg1}.\\\\
	If $d\geq 2$ and $G\subset \mathbb{R}^d$ is a domain, then for $1\leq p <\infty$ the  \textit{Sobolev space} $W^1_{p,loc}(G)$ consists of functions $f=(f_1,f_2,\cdots, f_d):G\to\mathbb{R}^d$ for which the first order weak partial derivatives $\partial_i f_j$ exist and are locally in $L^p$. A continuous map $f\in W^1_{d,loc}(G)$ is called \textit{quasiregular} if there exists a constant $K_O\geq 1$ such that \begin{equation}\label{quasi}
	\left|Df(x)\right|^d\leq K_O J_f(x) \hspace{2mm} a.e.,\end{equation} where $Df(x)$ denotes the total derivative, $$|Df(x)|=\sup_{|h|=1}|Df(x)(h)|$$ denotes the operator norm of the derivative, and $J_f(x)$ denotes the Jacobian determinant. Also let $$\ell(Df(x))=\inf_{|h|=1}|Df(x)(h)|.$$
	The condition that \eqref{quasi} is satisfied for some $K_O\geq1$ implies that $$K_I\ell(Df(x))^d\geq J_f(x),\hspace{2mm} a.e.,$$
	for some $K_I\geq 1$. The smallest constants $K_O$ and $K_I$ for which those two conditions hold are called \textit{outer dilatation} and \textit{inner dilatation} respectively. We call the maximum of those two numbers the dilatation of $f$ and we denote it by $K(f)$. We say that $f$ is $K$-quasiregular if $K(f)\leq K$, for some $K\geq 1$. We also say that $f$ is \textit{uniformly $K$-quasiregular} if all the iterates of $f$ are $K$-quasiregular. Quasiregular maps have many of the properties that holomorphic maps have. In particular, we will often use the fact that non-constant quasiregular maps are open and discrete.\\\\
	An important tool that we will need in order to define the Julia set of a quasiregular map is the capacity of a condenser. A condenser in $\mathbb{R}^d$ is a pair $E=(A,C)$, where $A$ is an open set in $\mathbb{R}^d$ and $C$ is a compact subset of $A$. The \textit{conformal capacity} or just \textit{capacity} of the condenser $E$ is defined as 
	$$\capac E=\inf_{u}\int_{A}|\nabla u|^ddm,$$
	where the infimum is taken over all non-negative functions $u\in C_0^{\infty}(A)$ which satisfy $u_{|C}\geq 1$ and $m$ is the $d$-dimensional Lebesgue measure.\\\\
	If $\capac (A,C)=0$ for some bounded open set $A$ containing $C$, then it is also true that $\capac (A',C)=0$ for every other bounded set $A'$ containing $C$;\cite[Lemma III.2.2]{Rickman}. In this case we say that $C$ has zero capacity and we write $\capac C=0$; otherwise we say that $C$ has positive capacity and we write $\capac C>0$. Also for an arbitrary set $C\subset \mathbb{R}^d$, we write $\capac C=0$ when $\capac F=0$ for every compact subset $F$ of $C$. If the capacity of a set is zero then this set has Hausdorff dimension zero \cite[Theorem~VII.1.15]{Rickman}. Thus a zero capacity set is small in this sense. It is also quite easy to see that for any two sets $S,B$ with $S\subset B$ if $\capac B=0$ then $\capac S=0$.\\\\
	A useful property of quasiregular maps is that they do not increase too much the capacity of condensers, namely the following theorem holds, which is known as the $K_I$ inequality, \cite[Theorem~II.10.10]{Rickman}.
	\begin{theorem}\label{KI ineq}
		Let $f:G\to \mathbb{R}^d$ be a nonconstant quasiregular map and $E=(A,C)$ a condenser in $G$, then $$\capac f(E)\leq K_I(f)\capac E.$$
	\end{theorem}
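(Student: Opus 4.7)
The plan is to reduce this statement to the classical identification between conformal capacity and the modulus of a curve family, together with the Poletskii-type modulus inequality for quasiregular maps.

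First, I would invoke the standard duality between capacity and modulus: for a condenser $E=(A,C)$,
$$\capac E = M(\Gamma_E),$$
where $\Gamma_E$ denotes the family of all locally rectifiable curves in $A$ joining $C$ to $\partial A$ and $M$ is the $d$-dimensional (conformal) modulus. The inequality $\le$ is obtained by showing that any admissible $u$ in the definition of $\capac E$ produces an admissible density $|\nabla u|$ for $\Gamma_E$ via the fundamental theorem of calculus along curves, while the reverse inequality uses a mollified distance function built from an admissible density and the co-area formula.

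The main ingredient is then Poletskii's inequality: for any nonconstant quasiregular $f$ and any curve family $\Gamma$ in $G$,
$$M(f\Gamma) \le K_I(f)\, M(\Gamma).$$
The strategy for this inequality is the standard one. Given an admissible density $\rho$ for $f\Gamma$, one constructs a pullback density $\rho^{*}$ on $G$ of the form $\rho^{*}(x) = \rho(f(x))\, \ell(Df(x))^{-1}$ (adjusted using a multiplicity counting function $N(f,\cdot)$), and verifies that $\rho^{*}$ is admissible for $\Gamma$ except on an exceptional family of modulus zero. The verification uses the \textbf{maximal path-lifting} for quasiregular maps developed by Rickman and V\"ais\"al\"a, which exploits openness and discreteness to lift curves in $f\Gamma$ to curves in $\Gamma$; combined with the ACL${}^d$-property of $f$ this allows one to integrate $\rho$ along image curves by integrating $\rho^{*}$ along their lifts. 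The bound $J_f \ge K_I(f)^{-1}\ell(Df)^d$ and a change of variables then yields the factor $K_I(f)$ upon integrating $(\rho^{*})^d$ over $G$.

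Finally, since every curve in $f\Gamma_E$ joins $f(C)$ to the boundary of $f(A)$ (or can be truncated to one), we have $M(\Gamma_{f(E)})\le M(f\Gamma_E)$, and combining with the duality gives
$$\capac f(E) = M(\Gamma_{f(E)}) \le M(f\Gamma_E) \le K_I(f)\, M(\Gamma_E) = K_I(f)\, \capac E.$$
The main obstacle, by a wide margin, is Poletskii's modulus inequality itself, and specifically the maximal path-lifting construction: one must handle the branch set $B_f$ (where $J_f$ may vanish and lifts may fail to be defined) by restricting to a suitable exceptional family of $M$-measure zero. The modulus-to-capacity duality and the final chain of inequalities are then essentially bookkeeping.
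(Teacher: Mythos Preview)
The paper does not give its own proof of this statement: it is quoted as the $K_I$ inequality from Rickman's monograph \cite[Theorem~II.10.10]{Rickman} and used as a black box. Your outline is essentially the standard argument one finds there --- reduce capacity to the modulus of the connecting family, then invoke Poletski\u{\i}'s modulus inequality $M(f\Gamma)\le K_I(f)M(\Gamma)$, whose hard content is indeed the maximal path--lifting construction over the branch set.

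One genuine slip: your last bridging inequality is argued in the wrong direction. You claim that every curve in $f\Gamma_E$ joins $f(C)$ to $\partial f(A)$ (perhaps after truncation), hence $M(\Gamma_{f(E)})\le M(f\Gamma_E)$. Neither the premise nor the implication is right. A curve $\gamma\in\Gamma_E$ ends at $\partial A$, but $f(\partial A)$ need not lie in $\partial f(A)$ when $f$ is not injective, so $f\gamma$ can terminate in the interior of $f(A)$; and even when it holds, the minorisation yields $M(f\Gamma_E)\le M(\Gamma_{f(E)})$, the reverse of what you need. The correct step is to start from a curve $\beta\in\Gamma_{f(E)}$ and lift it: choose a preimage of its initial point in $C$ and take a maximal $f$--lift $\alpha$ in $A$. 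By the standard properties of maximal lifts for open discrete maps, either $\alpha$ covers all of $\beta$ or it leaves every compact subset of $A$; in either case a suitable restriction of $\alpha$ lies in $\Gamma_E$ and $f\alpha$ is a subcurve of $\beta$. Thus $f\Gamma_E$ minorises $\Gamma_{f(E)}$, giving $M(\Gamma_{f(E)})\le M(f\Gamma_E)$ as required. This uses exactly the same path--lifting machinery you already invoked for Poletski\u{\i}'s inequality, so it is not a new obstacle --- but it is an additional place where lifting, not just image--pushing, is needed.
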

	\hspace{1mm}\\
	A quasiregular map $f:\mathbb{R}^d\to\mathbb{R}^d$ is said to be of \textit{transcendental type} if $\lim_{x\to\infty}f(x)$ does not exist and it is said to be of \textit{polynomial type} if this limit is $\infty$. Furthermore, if $G\subset \overline{\mathbb{R}^d}$, a non constant and continuous map $f:G \to \overline{\mathbb{R}^d}$ is called \textit{quasimeromorphic} if $f^{-1}(\infty)$ is discrete and $f$ is quasiregular in $G\setminus (f^{-1}(\infty)\cup\{\infty\})$. Rickman \cite{Rickman1980,rickman1985} has extended Picard's great theorem to quasiregular maps and shown that there exists a constant $q=q(d,K)$ such that if $f:\mathbb{R}^d\to \mathbb{R}^d$ is a $K$-quasiregular map of transcendental type then  there are at most $q(d,K)$ points that are taken finitely often by $f$. This means that if we define the exceptional set $E(f)$ for a $K$-quasiregular  map as the points with finite backward orbit, then $|E(f)|\leq q(d,K)$. \\\\
	In \cite{berg2013} Bergweiler developed a Fatou-Julia theory for  quasiregular self-maps of $\overline{\mathbb{R}^d}$, which include polynomial type quasiregular maps, and can be thought of as analogs of rational maps, while in \cite{B-Nicks} Bergweiler and Nicks did the same but for transcendental type quasiregular maps. Following those two papers we define the Julia set of $f:\mathbb{R}^d\to \mathbb{R}^d$, denoted ${\mathcal{J}}(f)$, to be the set of all those $x\in\mathbb{R}^d$ such that
	$$ \capac \left(\mathbb{R}^d\setminus \bigcup_{k=1}^\infty f^k(U)\right)=0
	$$	for every neighbourhood $U$ of $x$. Following \cite{NICKS2017} we call the complement of ${\mathcal{J}}(f)$ the \textit{quasi-Fatou set}, and we denote it by $QF(f)$ . We also want to define the Julia set for a quasimeromorphic map of transcendental type with at least one pole, $f:\mathbb{R}^d \to \overline{\mathbb{R}^d}$. This was done by Warren in \cite{WARREN2018} where he defined
	\[{\mathcal{J}}(f)=\Big\{ x\in\overline{\mathbb{R}^d}\setminus\overline{O^{-}_{f}(\infty)}:\card\left(\overline{\mathbb{R}^d}\setminus O^{+}_{f}(U_x)\right)<\infty \Big\}\cup\overline{O^{-}_{f}(\infty)},\] where $U_x$ is any neighbourhood of $x$ with $U_x\subset\overline{\mathbb{R}^d}\setminus\overline{O^{-}_{f}(\infty)}$ and $ O^{+}_{f}(U_x)=\bigcup_{n=0}^{\infty}f^n(U_x)$. 
	In particular if $f$ has an infinite backward orbit of infinity then ${\mathcal{J}}(f)=\overline{O^{-}_{f}(\infty)}$.\\\\
	Note that our definition of the Julia set evokes the blow-up property that the Julia set has  in complex dynamics. Also note that we do not assume anything about the normality of the family of iterates of $f$ in the quasi-Fatou set. For the motivation behind those definitions we refer to \cite{Berg1,berg2013}. Also let us mention that the definition of the Julia set using non-normality generalizes well for uniformly quasiregular maps and the two definitions are equivalent in this case. This is also true in the case of holomorphic maps in the complex plane.\\\\
	Finally, let us discuss the fast escaping set. The fast escaping set, as we have already mentioned, was first defined by Bergweiler and Hinkkanen in \cite{B-H}. Intuitively the  fast escaping set is the set of points that escape to infinity as fast as possible. In \cite{B-H} it is also proved that $${\mathcal{J}}(f)=\partial {\mathcal{A}}(f).$$
	
	Rippon and Stallard in their papers \cite{R-S2,R-S} gave two other equivalent definitions for the fast escaping set which are useful. They showed that
	\begin{equation}\label{third}
	{\mathcal{A}}(f)=\{z\in\mathbb{C}: \text{ there exists}\hspace{2mm}L\in\mathbb{N} \hspace{2mm}\text{such that} \hspace{2mm}f^{n+L}(z)\not \in T(f^n(D)), \hspace{2mm}\text{for all}\hspace{2mm} n\in \mathbb{N}\},\end{equation}
	where $D$ is any open disc meeting ${\mathcal{J}}(f)$ and $T(X)$ is the \textit{topological hull} of the set $X\subset \mathbb{C}$, in other words the union of $X$ with its bounded complementary components.\\\\
	The  fast escaping set of a quasiregular map, which was first described by Bergweiler, Drasin and Fletcher in \cite{B-D-F}, is defined in a very similar way to the complex case, namely 
	\begin{equation}\label{qfirst}
	{\mathcal{A}}(f)\vcentcolon=\{x\in\mathbb{R}^d: \text{ there exists}\hspace{2mm}L\in\mathbb{N} \hspace{2mm}\text{such that} \hspace{2mm}f^{n+L}(x)\not \in T(f^n(B(0,R))), \hspace{2mm}\text{for all}\hspace{2mm} n\in \mathbb{N}\},
	\end{equation}
	where $R>0$ is chosen so large that $$T(f^n(B(0,R)))\supset B(0,r_n)$$ and $r_n>0$ is a sequence that tends to $\infty$. Such an $R$ is guaranteed to exist by \cite[Lemma 5.1]{BFLM}.\\\\
	Also Bergweiler, Drasin and Fletcher gave two other equivalent definitions, in the same spirit as those for the complex case, which we omit here. Furthermore they proved that 
	\begin{theorem}\label{berg fast}
		Let $f:\mathbb{R}^d\to\mathbb{R}^d$ be a quasiregular map of transcendental type. Then ${\mathcal{A}}(f)$ is non-empty and every connected component of ${\mathcal{A}}(f)$ is unbounded.
	\end{theorem}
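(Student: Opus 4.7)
The plan is to adapt the strategy used by Eremenko for the escaping set and by Bergweiler-Hinkkanen and Rippon-Stallard for the fast escaping set in the complex setting, replacing the maximum modulus principle with growth estimates for transcendental-type quasiregular maps and exploiting the elementary topological fact that the complement of a topological hull $T(X)\subset\mathbb{R}^d$ is a connected, unbounded set.

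For non-emptiness, I would fix $R$ large enough that the sequence $r_n$ with $T(f^n(B(0,R)))\supset B(0,r_n)$ tends to infinity, and produce a point of $\mathcal{A}(f)$ by an iterated preimage argument. Using the transcendental-type growth $M(r,f)/r\to\infty$, one finds for each $k$ a point $x_k$ of controlled modulus whose orbit $x_k,f(x_k),\ldots,f^{k+L}(x_k)$ overshoots the required hulls, namely $f^{n+L}(x_k)\notin T(f^n(B(0,R)))$ for $n=0,\ldots,k$. Normalizing the $x_k$ to lie in a fixed compact ball $\overline{B(0,\rho)}$, chosen large enough that the preimage construction closes up there, and extracting a limit point $x$ by compactness yields a candidate; one verifies that $x\in\mathcal{A}(f)$.

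For the unboundedness of components, fix $x\in\mathcal{A}(f)$, choose $L$ with $x\in X_L:=\{y:f^{n+L}(y)\notin T(f^n(B(0,R)))\text{ for all }n\ge 0\}$, and let $\widetilde{C}$ be the component of the closed set $X_L$ through $x$. Suppose $\widetilde{C}$ is bounded. Each boundary point $y\in\partial\widetilde{C}$ fails the defining condition at some finite level $n(y)$, placing $f^{n(y)+L}(y)$ inside $T(f^{n(y)}(B(0,R)))$. The open sets $U_n:=\{y:f^{n+L}(y)\in T(f^n(B(0,R)))\}$ cover $\partial\widetilde{C}$, so finitely many suffice by compactness; combined with a monotonicity lemma showing that $y\in U_n$ implies $y\in U_m$ for all $m\ge n$ (so that later iterates remain trapped in later hulls), this gives a uniform level $N$ with $f^{N+L}(\partial\widetilde{C})\subset T(f^N(B(0,R)))$. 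Since $f^{N+L}(x)$ lies outside this hull, the compact connected continuum $f^{N+L}(\overline{\widetilde{C}})$ has topological boundary inside the hull yet meets its unbounded complement, contradicting the connectedness of $\mathbb{R}^d\setminus T(f^N(B(0,R)))$.

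The main obstacle is the monotonicity lemma needed to extract the uniform $N$: a priori the escape times $n(y)$ could be unbounded on $\partial\widetilde{C}$. In the complex setting this follows essentially from the maximum modulus principle. For quasiregular maps in $\mathbb{R}^d$ the substitute must come from the construction of the hull itself together with the growth of $r_n$; concretely, one wants to show that if $f^{n(y)+L}(y)$ lies in the $n(y)$-th hull, then so does $f^{m+L}(y)$ with respect to the $m$-th hull for every $m\ge n(y)$. Verifying this propagation in arbitrary dimension, where one cannot appeal to the maximum modulus principle directly, is the delicate technical step; once it is in place, the topology of $\mathbb{R}^d$ supplies the contradiction in exactly the same way as in the plane.
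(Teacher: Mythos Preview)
The paper does not prove this theorem at all; it is quoted from Bergweiler, Drasin and Fletcher \cite{B-D-F} and the reader is explicitly referred there for the proof. So there is nothing in the present paper to compare your argument against.

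That said, your sketch for unboundedness has a genuine gap. You take $\widetilde{C}$ to be the component of the \emph{closed} set $X_L$ through $x$ and then claim that every $y\in\partial\widetilde{C}$ fails the defining condition of $X_L$ at some finite level $n(y)$. But components of closed sets are closed, so $\partial\widetilde{C}\subset\widetilde{C}\subset X_L$: every boundary point satisfies the condition for \emph{all} $n$, and your cover by the sets $U_n$ never gets off the ground. Nor can this be repaired by passing to a slightly larger open neighbourhood of $\widetilde{C}$, since other components of $X_L$ may accumulate on $\widetilde{C}$.

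Incidentally, the ``monotonicity lemma'' you flag as the main obstacle is in fact easy: if $f^{n+L}(y)\in T(f^n(B(0,R)))$ then applying $f$ and using $f(T(X))\subset T(f(X))$ (this is \cite[Proposition~2.4]{B-D-F}, also invoked in the paper's Lemma~\ref{lemma2}) gives $f^{n+1+L}(y)\in T(f^{n+1}(B(0,R)))$, and one iterates. The real difficulty lies elsewhere. The route taken in \cite{B-D-F}, following Rippon--Stallard, is to write $X_L=\bigcap_{n\ge0}X_L^n$ as a decreasing intersection of closed sets, show that for each fixed $n$ the component of $X_L^n$ through $x$ is already unbounded (this is where the hull structure and the growth of $r_n$ enter), and then appeal to a topological lemma about nested intersections of unbounded continua.
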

	For more details and the proof of this theorem we refer to \cite{B-D-F}.
	Unfortunately, in the quasiregular case it is still not known if ${\mathcal{J}}(f)=\partial {\mathcal{A}}(f)$. 
	But let us mention that the above equality is known to be true if $f$ does not grow too slowly and we always know that ${\mathcal{J}}(f)\subset\partial {\mathcal{A}}(f)$. We refer to \cite{B-F-N} for more details on this.
	
	\section{Proof of Theorems \ref{my theorem} and \ref{baker generalization}}
	In order to prove Theorem \ref{my theorem} we will need several lemmas. 
	
	\begin{lemma}\label{lemma 1}
		Let $f:\mathbb{R}^d\to \mathbb{R}^d$ and $g:\mathbb{R}^d\to \mathbb{R}^d$ be permutable quasiregular maps. Then $$g({\mathcal{J}}(f))\subset {\mathcal{J}}(f)\hspace{2mm}\text{and}\hspace{2mm}f({\mathcal{J}}(g))\subset {\mathcal{J}}(g).$$
	\end{lemma}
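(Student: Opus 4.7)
The plan is to verify directly that if $x \in \mathcal{J}(f)$ then every neighbourhood $V$ of $g(x)$ satisfies $\capac(\mathbb{R}^d \setminus \bigcup_{k\geq 1} f^k(V)) = 0$. The only ingredients will be the continuity of $g$, the commutation relation iterated to $g \circ f^k = f^k \circ g$, and the behaviour of capacity under quasiregular maps via the $K_I$-inequality from Theorem \ref{KI ineq}.

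First I would fix $x \in \mathcal{J}(f)$ and an arbitrary open neighbourhood $V$ of $g(x)$, and use continuity of $g$ to choose an open neighbourhood $U$ of $x$ with $g(U) \subset V$. By induction on the relation $f\circ g = g \circ f$, one has $g(f^k(U)) = f^k(g(U)) \subset f^k(V)$ for every $k \geq 1$, so
\[
g\left(\bigcup_{k=1}^{\infty} f^k(U)\right) \subset \bigcup_{k=1}^{\infty} f^k(V).
\]
Taking complements and accounting for points not in the image of $g$ gives the set-theoretic inclusion
\[
\mathbb{R}^d \setminus \bigcup_{k=1}^{\infty} f^k(V) \;\subset\; \left(\mathbb{R}^d \setminus g(\mathbb{R}^d)\right) \cup g\left(\mathbb{R}^d \setminus \bigcup_{k=1}^{\infty} f^k(U)\right).
\]

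Next I would argue that each of the two sets on the right has capacity zero. The set $\mathbb{R}^d \setminus g(\mathbb{R}^d)$ consists of omitted values of $g$; by Rickman's Picard theorem cited in Section 2, a quasiregular map of transcendental type has at most $q(d,K(g))$ such points, while a polynomial type map is surjective, so in either case this set is finite and hence has capacity zero. For the second set, $\mathbb{R}^d \setminus \bigcup f^k(U)$ has capacity zero by the assumption $x \in \mathcal{J}(f)$, and quasiregular maps send capacity-zero sets to capacity-zero sets: applying the $K_I$-inequality to a condenser $(A,C)$ with $C$ a compact subset of the capacity-zero set and $A$ a bounded open neighbourhood yields $\capac(g(A), g(C)) = 0$, hence $\capac g(C) = 0$. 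A compact exhaustion argument, exploiting that $\mathbb{R}^d \setminus \bigcup f^k(U)$ is closed and thus its intersection with each closed ball is compact, transfers this to the whole image. Combined with the subadditivity of the class of capacity-zero sets, the right-hand side in the displayed inclusion has capacity zero, and monotonicity of capacity then gives $\capac(\mathbb{R}^d \setminus \bigcup f^k(V)) = 0$. Since $V$ was arbitrary, $g(x) \in \mathcal{J}(f)$; swapping the roles of $f$ and $g$ gives the second inclusion.

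The main technical obstacle is the passage from the $K_I$-inequality, which is stated only for condensers, to the statement that the image of an arbitrary (possibly unbounded and non-compact) capacity-zero set under $g$ is again of capacity zero. This forces the compact-exhaustion argument above and uses the fact that capacity-zero sets are stable under countable unions, together with the closedness of the complement of the open set $\bigcup f^k(U)$. Everything else is routine set-theoretic book-keeping around the commutation relation.
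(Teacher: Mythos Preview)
Your proof is correct and follows essentially the same route as the paper: both pull back the neighbourhood of $g(x)$ to a neighbourhood of $x$, use the commutation relation to compare forward orbits, push a capacity-zero set forward via the $K_I$-inequality, and invoke Rickman's Picard theorem for the finitely many omitted values of $g$. The only cosmetic difference is that the paper phrases the set-theoretic step through $g(g^{-1}(\cdot))$ rather than your direct decomposition of the complement into the omitted set and a $g$-image.
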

	\begin{proof}
		Take a $x_0\in{\mathcal{J}}(f)$ and take $U$ be a neighbourhood of $g(x_0)$. Name $V$ the component of $g^{-1}(U)$ which contains $x_0$. We know, by the definition of the Julia set, that 
		\begin{equation}\label{eq}
		\capac \left(\mathbb{R}^d\setminus\bigcup_{n=1}^{\infty}f^n(V)\right)=0.
		\end{equation}
		But since $f,g$ are permutable we have that $f^n(g(x))=g(f^n(x)), $ for all $ x\in V,$
		which implies that $f^n(x)\in g^{-1}(f^n(U)),\hspace{1mm}\text{for all}\hspace{1mm} x\in V.$
		Hence, 
		\begin{align*}
		\mathbb{R}^d\setminus\bigcup_{n=1}^{\infty}f^n(V)&\supset \mathbb{R}^d\setminus\bigcup_{n=1}^{\infty}g^{-1}(f^n(U)).
		\end{align*}
		Thus, by (\ref{eq}) and the fact that subsets of zero capacity sets have zero capacity we have that $$\capac \left(\mathbb{R}^d\setminus\bigcup_{n=1}^{\infty}g^{-1}(f^n(U))\right)=0.$$
		But since $\bigcup_{n=1}^{\infty}g^{-1}(f^n(U))=g^{-1}\left(\bigcup_{n=1}^{\infty}f^n(U)\right)$ and $g^{-1}(\mathbb{R}^d)=\mathbb{R}^d$ this implies that $\capac \left(g^{-1}\left(\mathbb{R}^d\setminus\bigcup_{n=1}^{\infty}f^n(U)\right)\right)=0.$
		Hence, by the $K_I$-inequality (Theorem \ref{KI ineq}) we will have that $$\capac \left(g\left(g^{-1}\left(\mathbb{R}^d\setminus\bigcup_{n=1}^{\infty}f^n(U)\right)\right)\right)=0.$$
		Since $g$ is a quasiregular self-map of $\mathbb{R}^d$ we know by Rickman's generalization of Picard's theorem that it omits at most a finite number of points. Thus $$g\left(g^{-1}\left(\mathbb{R}^d\setminus\bigcup_{n=1}^{\infty}f^n(U)\right)\right)=\mathbb{R}^d\setminus\left(\bigcup_{n=1}^{\infty}f^n(U)\cup\{a_1,a_2,\cdots, a_m\}\right),$$
		where $a_1,a_2,\cdots, a_m$ are the omitted values of $g$. 
		Hence, by using the well known fact that the union of capacity zero sets is also of zero capacity we will have that $\capac \left(\mathbb{R}^d\setminus\bigcup_{n=1}^{\infty}f^n(U)\right)=0.$
		Since $U$ was an arbitrary neighbourhood of $g(x)$, this implies that $g(x)\in {\mathcal{J}}(f).$\\\\
		For the other half of the theorem, the proof is completely analogous to this one with $f$ and $g$ changing roles.
	\end{proof}
	\begin{lemma}\label{lemma2}
		Let $f:\mathbb{R}^d\to \mathbb{R}^d$ and $g:\mathbb{R}^d\to \mathbb{R}^d$ be permutable quasiregular maps of transcendental type. Then $$g^{-1}\left({\mathcal{A}}(f)\right)\subset {\mathcal{A}}(f)\hspace{2mm}\text{and}\hspace{2mm}g^{-1}\left( \overline{{\mathcal{A}}(f)}\right)\subset \overline{{\mathcal{A}}(f)}. $$
		Also $$f^{-1}\left({\mathcal{A}}(g)\right)\subset {\mathcal{A}}(g)\hspace{2mm}\text{and}\hspace{2mm}f^{-1}\left( \overline{{\mathcal{A}}(g)}\right)\subset \overline{{\mathcal{A}}(g)}. $$
	\end{lemma}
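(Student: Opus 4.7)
The plan is to prove $g^{-1}(\mathcal{A}(f)) \subset \mathcal{A}(f)$ directly; by swapping the roles of $f$ and $g$ this immediately yields $f^{-1}(\mathcal{A}(g)) \subset \mathcal{A}(g)$. The closure versions will then follow using the fact that non-constant quasiregular maps are open: given $x$ with $g(x) \in \overline{\mathcal{A}(f)}$, pick a sequence $y_n \to g(x)$ with $y_n \in \mathcal{A}(f)$ and lift via openness of $g$ to $x_n \to x$ with $g(x_n) = y_n$, placing each $x_n$ in $g^{-1}(\mathcal{A}(f)) \subset \mathcal{A}(f)$ and hence $x$ in $\overline{\mathcal{A}(f)}$.

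For the main inclusion, take $x$ with $g(x) \in \mathcal{A}(f)$. By \eqref{qfirst} there exists $L \in \mathbb{N}$ with $f^{n+L}(g(x)) \notin T(f^n(B(0,R)))$ for every $n \geq 0$, and commutativity rewrites this as $g(f^{n+L}(x)) \notin T(f^n(B(0,R)))$. The crux is to exhibit a single $M \in \mathbb{N}$ such that, for all sufficiently large $n$,
\[ g\bigl(T(f^{n-M}(B(0,R)))\bigr) \subset T(f^n(B(0,R))). \]
Since $g$ is continuous, $g(B(0,R))$ is bounded, so one can pick $M$ large enough that $g(B(0,R)) \subset T(f^M(B(0,R)))$ (using $r_M \to \infty$). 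Commutativity then gives
\[ g(f^{n-M}(B(0,R))) = f^{n-M}(g(B(0,R))) \subset f^{n-M}\bigl(T(f^M(B(0,R)))\bigr), \]
and a topological argument, exploiting that the boundary of each bounded complementary component of $f^M(B(0,R))$ lies in $\partial f^M(B(0,R))$, yields $f^{n-M}(T(f^M(B(0,R)))) \subset T(f^n(B(0,R)))$. The extension from the core $f^{n-M}(B(0,R))$ to its topological hull is handled by applying the same boundary argument to the continuous open map $g$, together with the monotonicity of $T$ under inclusion (if $A \subset B$ then $T(A) \subset T(B)$, checked by tracing where bounded complementary components of $A^c$ sit relative to those of $B^c$).

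Granted the key containment, the identity $g(f^{n+L}(x)) \notin T(f^n(B(0,R)))$ forces $f^{n+L}(x) \notin T(f^{n-M}(B(0,R)))$ for all large $n$, for otherwise its $g$-image would lie in $T(f^n(B(0,R)))$. Reindexing $m = n - M$ gives $f^{m+L+M}(x) \notin T(f^m(B(0,R)))$ for every $m \geq 0$, so $x \in \mathcal{A}(f)$ with shift $L+M$. The hardest part will be the topological transport of the hull through $g$: because $g$ is continuous and open but not injective, in principle a bounded complementary component of $f^{n-M}(B(0,R))$ could map into the unbounded complementary component of $g(f^{n-M}(B(0,R)))$, and ruling this out requires careful use of the boundary inclusion for bounded complementary components of open sets together with the open mapping property of $g$.
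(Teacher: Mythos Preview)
Your proposal is correct and follows essentially the same line as the paper: both arguments pass from $g(x)\in\mathcal{A}(f)$ to $g(f^{n+L}(x))\notin T(f^n(B(0,R)))$ via commutativity, and both then use the hull--transport fact $h(T(A))\subset T(h(A))$ for continuous open maps $h$ (which the paper quotes as \cite[Proposition~2.4]{B-D-F}) to deduce that $f^{n+L}(x)$ itself lies outside a suitable hull; the closure statement is handled identically via openness of $g$.

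The one point of divergence is bookkeeping. The paper works with \emph{two} radii: it fixes $R_1$ admissible for the definition of $\mathcal{A}(f)$ and then picks $R>R_1$ with $g(B(0,R_1))\subset B(0,R)$; this allows the conclusion $f^{n+L}(x_0)\notin T(f^n(B(0,R_1)))$ with the \emph{same} shift $L$ and no reindexing. You instead keep a single radius $R$ and absorb $g(B(0,R))$ into $T(f^M(B(0,R)))$, incurring an extra index shift $M$ and obtaining $x\in\mathcal{A}(f)$ with shift $L+M$. Both devices work; the paper's two--radius trick is slightly cleaner because it avoids the reindexing and the need to iterate the hull--transport fact twice (once for $f^{n-M}$ and once for $g$). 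Your flagged ``hardest part''---that a bounded complementary component might map under $g$ into the unbounded one---is precisely the content of the cited Proposition~2.4, and your sketch of it (boundary of a bounded complementary component lies in the boundary of the set, plus openness) is the standard argument.
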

	\begin{proof}
		Take $R_1>0$ so big that  $$T(f^n(B(0,R_1)))\supset B(0,r_n)$$ for some sequence $r_n$ with $r_n\to \infty$. Also choose an $R>0$ big enough so that $g(B(0,R_1))\subset B(0,R)$ while at the same time $R>R_1$, which implies that $$T(f^n(B(0,R)))\supset B(0,r_n).$$ 
		Pick now a $x_0 \in \mathbb{R}^d$ such that $g(x_0)\in {\mathcal{A}}(f)$. We will then show that $x_0\in{\mathcal{A}}(f)$. We know from \eqref{qfirst}, in other words the definition of the fast escaping set, that there exists an $L\in\mathbb{N}$ such that $$f^{n+L}(g(x_0))\not\in T(f^n(B(0,R))),  \hspace{2mm}\text{for all}\hspace{2mm} n\in \mathbb{N}.$$ 
		Since $f^{n+L}(g(x_0))=g(f^{n+L}(x_0))$ we will have that $$g(f^{n+L}(x_0))\not\in T(f^n(B(0,R))),  \hspace{2mm}\text{for all}\hspace{2mm} n\in \mathbb{N}.$$
		This together with the fact that $g(B(0,R_1))\subset B(0,R)$ implies that $$	g(f^{n+L}(x_0))\not\in T(f^n(g(B(0,R_1)))$$ and thus by permutability
		\begin{align}
		g(f^{n+L}(x_0))&\not\in T(g(f^n(B(0,R_1)))\label{fst}.
		\end{align}
		Assume now that there is a $n\in \mathbb{N}$ such that $f^{n+L}(x_0)\in T(f^n(B(0,R_1)))$ then $$g(f^{n+L}(x_0))\in g(T(f^n(B(0,R_1)))).$$
		But its true that \cite[Proposition 2.4]{B-D-F}  $g(T(f^n(B(0,R_1))))\subset T(g(f^n(B(0,R_1)))).$ Thus we would have that $g(f^{n+L}(x_0))\in T(g(f^n(B(0,R_1))))$ which contradicts (\ref{fst}). Hence its true that $$f^{n+L}(x_0)\not\in T(f^n(B(0,R_1))),  \hspace{2mm}\text{for all}\hspace{2mm} n\in \mathbb{N}$$ and thus $x_0 \in {\mathcal{A}}(f)$. Hence $g^{-1}({\mathcal{A}}(f))\subset{\mathcal{A}}(f).$ Since $g$ is an open map it easily follows that 
		$$g^{-1}(\overline{{\mathcal{A}}(f)})\subset \overline{g^{-1}({\mathcal{A}}(f))}\subset \overline{{\mathcal{A}}(f)}.$$
		Lastly, for the other half of the theorem we just change the roles of $f$ and $g$.
	\end{proof}
	The next lemma tells us that the Julia set of a quasiregular map is the smallest completely invariant closed set under $f$, which is a well known property of the Julia set in the complex plane. 
	\begin{lemma}\label{invariant}
		Let $f:\mathbb{R}^d\to \mathbb{R}^d$ be a quasiregular map. If $K$ is a closed set with $f(K)\subset K$ and $f^{-1}(K)\subset K$  and $\capac K>0$ then $\mathcal{J}(f)\subset K$.
	\end{lemma}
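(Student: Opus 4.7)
The plan is to argue by contradiction. Suppose there exists $x_0 \in \mathcal{J}(f) \setminus K$. Since $K$ is closed, the complement $\mathbb{R}^d\setminus K$ is open and contains $x_0$, so I can choose a neighbourhood $U$ of $x_0$ with $U\subset \mathbb{R}^d\setminus K$.

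Next I translate the backward invariance hypothesis into a statement about orbits of $U$. The condition $f^{-1}(K)\subset K$ is, by contraposition, equivalent to $f(\mathbb{R}^d\setminus K)\subset \mathbb{R}^d\setminus K$. Iterating this gives $f^n(U)\subset \mathbb{R}^d\setminus K$ for every $n\ge 1$, and hence
\[
\bigcup_{n=1}^{\infty} f^n(U)\subset \mathbb{R}^d\setminus K, \qquad\text{i.e.}\qquad K\subset \mathbb{R}^d\setminus \bigcup_{n=1}^{\infty}f^n(U).
\]

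Finally I invoke the monotonicity of capacity recorded in Section~2 (a subset of a zero-capacity set has zero capacity, equivalently a superset of a positive-capacity set has positive capacity). Since $\capac K>0$, the displayed inclusion forces
\[
\capac\left(\mathbb{R}^d\setminus \bigcup_{n=1}^{\infty}f^n(U)\right)>0,
\]
which directly contradicts the defining property of $\mathcal{J}(f)$ at the point $x_0$. Therefore $\mathcal{J}(f)\subset K$, as required.

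There is no real obstacle here; the argument is purely set-theoretic together with one application of the definition of the Julia set and the monotonicity of capacity. I note that only the backward invariance $f^{-1}(K)\subset K$ is actually used in the proof; the forward invariance $f(K)\subset K$ in the statement plays no role in this argument and is presumably included because complete invariance is the natural hypothesis for a minimality statement of this kind.
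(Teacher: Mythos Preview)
Your proof is correct and is essentially the same argument as the paper's, which also uses only the backward invariance $f^{-1}(K)\subset K$, the closedness of $K$, and the capacity definition of $\mathcal{J}(f)$; the paper merely phrases it directly (showing every neighbourhood of a Julia point meets $K$) rather than by contradiction. Your remark that the hypothesis $f(K)\subset K$ is never used is accurate and applies equally to the paper's proof.
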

	\begin{proof}
		Take any neighbourhood, $U$, of a point $x\in{\mathcal{J}}(f), $ then by the definition of the Julia set $$\capac\left(\mathbb{R}^d\setminus\bigcup_{n=1}^{\infty}f^n(U)\right)=0.$$
		Hence, $ K\cap \bigcup_{n=1}^{\infty}f^n(U)\not=\emptyset.$
		This means that there is a $x_0\in U$ with $f^n(x_0)\in K$ for some $n\in \mathbb{N}$, and because $K$ is completely invariant under $f$ we will have that $x_0 \in K$. Hence, every neighbourhood, $U$ of a point in ${\mathcal{J}}(f)$ contains a point of $K$ and because $K$ is a closed set, this implies that ${\mathcal{J}}(f) \subset K$.
	\end{proof}
	\begin{proof}[Proof of Theorem \ref{my theorem}]
		First of all, since ${{\mathcal{A}}(f)}\subset {\mathcal{J}}(f)$ and since ${\mathcal{J}}(f)$ is closed we get that $\overline{{\mathcal{A}}(f)}\subset {\mathcal{J}}(f)$. As we have already mentioned, in the end of section 2, by \cite{B-F-N} we always know that $${\mathcal{J}}(f)\subset \partial{{\mathcal{A}}(f)}\subset \overline{{\mathcal{A}}(f)}.$$
		Hence we will have that $${\mathcal{J}}(f)=\overline{{\mathcal{A}}(f)}.$$
		Hence by Lemma \ref{lemma 1} we will have that $g({\mathcal{J}}(f))\subset {\mathcal{J}}(f)$ while from Lemma \ref{lemma2} we will have that $g^{-1}({\mathcal{J}}(f))\subset {\mathcal{J}}(f)$. This means that ${\mathcal{J}}(f)$ is completely invariant under $g$. Also from Theorem \ref{berg fast} we know that $\overline{{\mathcal{A}}(f)}$ contains continua, since its components are unbounded, and thus it cannot have zero capacity because zero capacity sets are totally disconnected (see \cite[Corollary III.2.5]{Rickman}) namely $$\capac\overline{{\mathcal{A}}(f)}=\capac{\mathcal{J}}(f)>0.$$
		Hence, we can now apply Lemma \ref{invariant}  and conclude that $\mathcal{J}(g)\subset \mathcal{J}(f)$.
		
		By a completely analogous argument we can also show that ${\mathcal{J}}(f) \subset {\mathcal{J}}(g)$ and thus ${\mathcal{J}}(g) ={\mathcal{J}}(f).$
	\end{proof}
	\begin{proof}[Proof of Theorem \ref{baker generalization}]
		We will prove that ${\mathcal{J}}(f)$ is completely invariant under $g$. We already know from Lemma \ref{lemma 1} that $g({\mathcal{J}}(f))\subset {\mathcal{J}}(f)$. Hence, it is enough to prove that $g^{-1}({\mathcal{J}}(f))\subset {\mathcal{J}}(f)$.\\\\
		Take a point $x_0\in\mathbb{R}^d$ such that $g(x_0)=\phi( f(x_0))\in {\mathcal{J}}(f)$. Take $V$ be any neighbourhood of $x_0$, then $U=\phi(f(V))=\{\phi(f(x)):x\in V\}$ is a neighbourhood of $\phi(f(x_0))$. Hence, 
		\begin{align}\label{eqbaker}
		\capac\left(\mathbb{R}^d\setminus\bigcup_{k=0}^{\infty}f^k(U)\right)=0.
		\end{align}
		But, it is true that $f\left(\phi^{-1}(U)\right)=\phi^{-1}\left(f(U)\right)$. Indeed, using the fact that $f$ commutes with $\phi\circ f$, \begin{align*}
		f\left(\phi^{-1}(U)\right)&=f\left(\phi^{-1}(\phi(f(V)\right)=f(f(V))\\&=\phi^{-1}(\phi(f(f(V)))=\phi^{-1}(f(\phi(f(V)))\\&=\phi^{-1}(f(U)).
		\end{align*}	 
		This easily implies that 
		\begin{equation}\label{eq2baker}
		f^n\left(\phi^{-1}(U)\right)=\phi^{-1}(f^n(U)),\hspace{1mm} \text{for all} \hspace{1mm}n\in\mathbb{N}.
		\end{equation} 
		By using (\ref{eqbaker}) and (\ref{eq2baker}) now, we conclude that \[\capac\left(\mathbb{R}^d\setminus\bigcup_{k=0}^{\infty}\phi\left(f^k\left(\phi^{-1}(U)\right)\right)\right)=\capac\left(\mathbb{R}^d\setminus\bigcup_{k=0}^{\infty}f^k(U)\right)=0.\]
		But it is true that \[\mathbb{R}^d\setminus\bigcup_{k=0}^{\infty}\phi\left(f^k\left(\phi^{-1}(U)\right)\right)=\phi\left(\mathbb{R}^d\setminus\bigcup_{k=0}^{\infty}f^k\left(\phi^{-1}(U)\right)\right).\] 
		Hence, by using the $K_I$-inequality (Theorem \ref{KI ineq}), we conclude that
		\[\capac\left(\mathbb{R}^d\setminus\bigcup_{k=0}^{\infty}f^k\left(\phi^{-1}(U)\right)\right)=0.\]
		In other words, \[\capac\left(\mathbb{R}^d\setminus\bigcup_{k=0}^{\infty}f^k(f(V))\right)=0,\]which implies that \[\capac\left(\mathbb{R}^d\setminus\bigcup_{k=0}^{\infty}f^k(V)\right)=0.\]Thus $x_0\in {\mathcal{J}}(f)$. By a similar argument we can also prove that ${\mathcal{J}}(g)$ is invariant under $f$.\\\\
		Now, since we know that $\capac{\mathcal{J}}(f)>0$ and  $\capac{\mathcal{J}}(g)>0$, we can apply Lemma \ref{invariant} twice and conclude that $\mathcal{J}(f)=\mathcal{J}(g)$.  
	\end{proof}
	
	\section{Proof of Theorem \ref{baker generalization 2}}
	In our proof of Theorem \ref{baker generalization 2} we will need the notion of a function having the \textit{pits effect}. This concept was first introduced by Littlewood and Offord in \cite{Littlewood} and a variant of it was used by Bergweiler and Nicks in \cite{B-Nicks} in order to develop an iteration theory for quasiregular maps of transcendental type. In what follows with $|\cdot|$ we denote the usual euclidean norm. 
	\begin{definition}
		A quasiregular map $f:\mathbb{R}^d\to\mathbb{R}^d$  of transcendental type is said to have the \textit{pits effect} if there exists $N\in \mathbb{N}$ such that, for all $\alpha>1$, for all $\lambda>1$ and all $\varepsilon>0$	there exists $R_0$ such that if $R>R_0$, then $$\{x\in\mathbb{R}^d:R\leq|x|\leq\lambda R,\hspace{1mm} |f(x)|\leq R^\alpha\}$$ can be covered by $N$ balls of radius $\varepsilon R$.
	\end{definition}
	We must also mention here that in \cite{B-Nicks} the authors first define the pits effect using the condition $|f(x)|\leq 1$ instead of $|f(x)|\leq R^\alpha$ and later prove that those two are actually the same \cite[Theorem 8.1]{B-Nicks}. 
	\begin{lemma}\label{pits}
		Let $f:\mathbb{R}^d\to \mathbb{R}^d$ and $g:\mathbb{R}^d\to \mathbb{R}^d$ be permutable quasiregular maps. Assume that $g=f+c$, where $c\not=0$ is a constant in $\mathbb{R}^d$. Then $f$ does not have the pits effect.
	\end{lemma}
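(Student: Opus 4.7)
The plan is to convert the permutation relation into a functional equation for $f$ and then use it to construct arithmetic progressions of low-$|f|$ points that no bounded number of small balls can cover, contradicting the pits effect.

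First, using $g(y)=f(y)+c$ and $f\circ g=g\circ f$, I would compute, for every $x$,
\[
f(f(x)+c)=f(g(x))=g(f(x))=f(f(x))+c.
\]
Setting $y=f(x)$, this reads $f(y+c)=f(y)+c$ for every $y$ in the range of $f$. Rickman's Picard theorem then gives that $\mathbb{R}^d\setminus f(\mathbb{R}^d)$ is finite, so the range of $f$ is dense, and continuity upgrades the identity to $f(y+c)=f(y)+c$ for all $y\in\mathbb{R}^d$. An induction yields $f(y+nc)=f(y)+nc$ for every $n\in\mathbb{Z}$ and every $y$.

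Next I would pick a value $v_0\notin E(f)$, so that $f^{-1}(v_0)$ is infinite and, being discrete, accumulates only at infinity; hence we can find $y_0\in f^{-1}(v_0)$ with $r:=|y_0|$ arbitrarily large. For every integer $n$ with $|n|\le r/(2|c|)$, the triangle inequality gives $|y_0+nc|\in[r/2,3r/2]$, while the functional equation from Step~1 gives
\[
|f(y_0+nc)|=|v_0+nc|\le|v_0|+r/2.
\]

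Finally, suppose for contradiction that $f$ has the pits effect with constant $N$, and apply the definition with $\lambda=3$, $\alpha=2$, and any $\varepsilon<1/(2N)$. Setting $R=r/2$, the points $y_0+nc$ above all lie in the annulus $R\le|x|\le\lambda R$ and satisfy $|f|\le R^\alpha$ once $r$ is sufficiently large, so they must be covered by $N$ balls of radius $\varepsilon R$. But these points are collinear with spacing $|c|$, so a single ball of radius $\varepsilon R$ contains at most $2\varepsilon R/|c|+1$ of them, while the total count is at least $2R/|c|-1$; the resulting inequality $N(2\varepsilon R/|c|+1)\ge 2R/|c|-1$ fails for all sufficiently large $R$ when $\varepsilon<1/(2N)$, producing the desired contradiction. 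I expect the only conceptually nontrivial step to be Step~1, where extending the identity from the range of $f$ to all of $\mathbb{R}^d$ crucially uses Rickman's theorem to guarantee that the complement of $f(\mathbb{R}^d)$ is finite; the rest reduces to triangle-inequality estimates and an arithmetic-progression counting argument.
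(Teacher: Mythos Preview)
Your argument is correct and follows the same strategy as the paper's: derive the translation identity $f(y+nc)=f(y)+nc$ from the commutation relation, then exhibit in an arbitrarily large annulus a long collinear set on which $|f|$ grows only linearly, which no fixed number of balls of radius $\varepsilon R$ can cover. The paper's version differs only cosmetically---it works with a continuous segment of the ray from the origin through $c$ rather than a discrete progression based at a far-out preimage, and verifies the translation identity only along that ray rather than globally via density; one small slip in your write-up is that $v_0\notin E(f)$ guarantees the full backward orbit is infinite, not $f^{-1}(v_0)$ itself, so you should instead appeal directly to Rickman's great Picard theorem (legitimate here since the pits effect is only defined for $f$ of transcendental type) to choose a value taken infinitely often.
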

	\begin{proof}
		For any $N\in\mathbb{N}$, we will find a sequence $R_m\to \infty$ and  $\lambda>1$, $\varepsilon>0$, $\alpha >1$ such that	\[A=\{x\in\mathbb{R}^d:R_m\leq|x|\leq\lambda R_m,\hspace{1mm} |f(x)|\leq R_m^\alpha\}\] cannot be covered by $N$ balls of radius $\varepsilon R_m$.\\\\
		First pick a $N\in\mathbb{N}$. Choose also a point $x_0\in\mathbb{R}^d$ that lies in the half-line connecting $0$ with $\infty$ and passes through $c$. Hence the sequence  $|x_0+nc|, n\in\mathbb{N}$ is an increasing sequence. Also since the number of omitted values of $f$ is finite we can assume that this half line does not contain any omitted values from the point $x_0-c$ onwards. We now set $R_m=|x_0+mc|$ and  we will show that a segment of the half line is contained in $A$ and that it is not possible to cover it with $N$ balls. Choose $\varepsilon=1/10$, then with $N$ balls of radius $R_m/10$ we can cover distance at most $\frac{NR_m}{5}$. Hence, if we take  
		\[(\lambda-1) R_m>\frac{NR_m}{5}\Leftrightarrow\lambda>\frac{N}{5}+1,\] then we cannot cover the part of the half line that lies between the circles with radius $R_m$ and $\lambda R_m$ with those $N$ balls. Now we only need to show that this part of the half line also satisfies the other condition; Namely that $|f(x)|\leq R_m^{\alpha}$ for some $\alpha>1$.  Observe that all the points on the half line, after $x_0$, can be written as $y+nc$ for some $y$ on the line segment from $x_0-c$ to $x_0$ and some $n\in\mathbb{N}$. Then since those points are not omitted by $f$ we have that   \[f(y+nc)=f(y+(n-1)c+c)=f(f(w_n)+c),\] $\hspace{1mm} \text{for some}\hspace{1mm} w_n\in\mathbb{R}^d\hspace{1mm} \text{with}\hspace{1mm} f(w_n)=y+(n-1)c$. Thus thanks to the fact that $f$ is commuting with $f+c$ we get that \[f(y+nc)=f(f(w_n))+c=f(y+(n-1)c)+c.\]By repeating this argument $n$ times we get that \[f(y+nc)=f(y)+nc,\hspace{1mm}\text{for all}\hspace{1mm}n\in\mathbb{N}.\] Hence for any point, $y+nc$, on the half line that lies between the circles with radius $R_m$ and $\lambda R_m$ we have that 
		\begin{align*}
		|f(y+nc)|=|f(y)+nc|&\leq|f(y)-y|+|y+nc|\\&\leq|f(y)-y|+\lambda R_m.\end{align*} If we now take $\alpha=2$  we will have $|f(y)-y|+\lambda R_m\leq R_m^\alpha$, for all big enough $R_m$ and thus the second condition will hold for all points on this line segment.
	\end{proof}
	\begin{lemma}\label{rotation}
		Let $f:\mathbb{R}^d\to\mathbb{R}^d$ be a quasiregular map of transcendental type and $L(x)=~aUx$ a linear map where $a\in\mathbb{R}\setminus \{0\}$ and $U\in SO(d)$ ($SO(d)$ is the special orthogonal group). If $g=L\circ f+c$, where $c\in \mathbb{R}^d$, commutes with $f$ then $|a|=1$.
	\end{lemma}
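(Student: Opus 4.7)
The plan is to reduce the commutation relation to one between $f$ and an affine map, then use the expanding dynamics of that affine map to force a linear growth bound on $f$ that contradicts transcendental type. For the reduction I would substitute $y = f(x)$ in $f\circ g = g\circ f$ to obtain $f(Ly + c) = Lf(y) + c$ for every $y$ in the image of $f$. Since $f$ is of transcendental type, Rickman's Picard theorem (recalled in Section~2) guarantees that $f$ omits at most finitely many values, so by continuity this identity extends to all $y \in \mathbb{R}^d$. Hence $f$ commutes with the affine map $h(y) := aUy + c$.

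Next I would argue by contradiction, assuming $|a| \neq 1$. Replacing $h$ by $h^{-1}$ (which is again of the form $a'U'y + c'$ with $|a'| = |a|^{-1}$) if necessary, I may assume $|a| > 1$. Then all eigenvalues of $aU$ lie off the unit circle, so $I - aU$ is invertible and $h$ has a unique fixed point $p = (I - aU)^{-1}c$ with iterates $h^n(x) = p + a^n U^n(x - p)$. The relation $f\circ h^n = h^n\circ f$ then reads
\begin{equation}\label{prop-eq}
f(p + a^n U^n z) - p = a^n U^n\bigl(f(p + z) - p\bigr), \qquad z \in \mathbb{R}^d,\ n \in \mathbb{N}.
\end{equation}

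Finally I would use \eqref{prop-eq} on the compact annulus $A := \{z : 1 \leq |z| \leq |a|\}$, on which $M_A := \sup_{z\in A}|f(p + z) - p|$ is finite. Given $w\in\mathbb{R}^d$ with $|w - p| \geq 1$, I pick the unique integer $n \geq 0$ with $|a|^n \leq |w - p| < |a|^{n+1}$ and set $z := a^{-n}U^{-n}(w - p) \in A$, so $w = p + a^n U^n z$. Then \eqref{prop-eq} yields $|f(w) - p| = |a|^n |f(p + z) - p| \leq M_A\,|w - p|$, and hence $M(r, f) = O(r)$ as $r \to \infty$. This contradicts the fact that a quasiregular map $\mathbb{R}^d \to \mathbb{R}^d$ of transcendental type satisfies $\log M(r, f) / \log r \to \infty$, so we must have $|a| = 1$. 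The step I expect to require the most care is the extension of the commutation from the image of $f$ to all of $\mathbb{R}^d$; once this is done, the affine-dynamics argument is routine, and the contradiction follows from the super-polynomial growth theorem for transcendental-type quasiregular maps.
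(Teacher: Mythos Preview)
Your proposal is correct and takes a genuinely different route from the paper's proof. The paper argues directly with $g=L\circ f+c$: it locates a point $y_{r'}$ on $\{|y|=r'\}$ realising $M(r',f)$, uses Rickman's Picard theorem to write $y_{r'}=L(f(x_{r'}))+c$, sets $r=|f(x_{r'})|$, and then the commutation $f(g(x_{r'}))=g(f(x_{r'}))$ yields $M(r',f)\le |a|\,M(r,f)+|c|$ with $r'\ge\lambda r$ for some fixed $\lambda>1$. This bounds $M(\lambda r,f)/M(r,f)$ and contradicts \cite[Lemma~3.3]{Berg2006}. Your argument instead first extracts from $f\circ g=g\circ f$ the simpler identity $f\circ h=h\circ f$ with the affine map $h(y)=aUy+c$ (the extension from $f(\mathbb{R}^d)$ to all of $\mathbb{R}^d$ by continuity is valid since Rickman's theorem makes the image cofinite), and then iterates $h$ about its fixed point to force the sharp linear bound $M(r,f)=O(r)$, which again contradicts the super-polynomial growth coming from \cite[Lemma~3.3]{Berg2006}. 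Both proofs reach a growth contradiction, but yours isolates the commutation with an affine expansion as the essential mechanism and obtains a stronger growth estimate, while the paper's version avoids the density/extension step by using surjectivity only at a single point.
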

	\begin{proof}
		Without loss of generality let us assume, towards a contradiction, that $|a|>1$. Otherwise we can just consider $g$ and $\frac{1}{a}U^{-1}g-c$ which have the required form. Pick a large positive number $r'>0$. Then there is a $y_{r'}\in\mathbb{R}^d$ with $|y_{r'}|=r'$ such that $M(r',f)=|f(y_{r'})|,$ where $M(r',f)=\max_{|z|=r'}\{|f(z)|\}$. Now by Rickman's generalization of Picard's theorem, the fact that $f$ is a transcendental quasiregular map and the fact that $L$ is injective there is a point $x_{r'}\in\mathbb{R}^d$ such that $y_{r'}=(L\circ f)(x_{r'})+c$ and thus \[M(r',f)=|f(y_{r'})|=|f((L\circ f)(x_{r'})+c)|.\]  We set $r=|f(x_{r'})|$. Note that \[r=\left|L^{-1}(y_{r'}-c)\right|\geq |y_{r'}|\left|L^{-1}\left(\frac{y_{r'}}{|y_{r'}|}\right)\right|-|L^{-1}(c)|\geq \frac{r'}{|a|}-|L^{-1}(c)|. \]  Thus $r\to\infty$ as $r'\to \infty$. Also note that \[r'=|y_{r'}|=\left|L(f(x_{r'}))+c\right|\geq |a|r-|c|.\]  Hence, if we take a $1<\lambda<|a|$ then for all large enough $r'$ we have that $r'\geq\lambda r$. From the fact obey the maximum modulus principle we conclude that  $M(r,f)$ is an increasing function of $r$. Hence \begin{align*}\frac{M(\lambda r,f)}{M(r,f)}&\leq\frac{M(r',f)}{M(r,f)}=\frac{|f(L(f(x_{r'}))+c)|}{M(r,f)}\\&=\frac{|L( f(f(x_{r'})))+c|}{M(r,f)}\\&\leq\frac{|a|\left|f(f(x_{r'}))\right|+|c|}{M(r,f)}\\&\leq\frac{M(r,f)|a|+|c|}{M(r,f)}.\end{align*}
		Hence $\frac{M(\lambda r,f)}{M(r,f)}$ stays bounded as $r\to \infty$, which is a contradiction since this ratio tends to infinity as $r\to \infty$ (see \cite[Lemma 3.3]{Berg2006}). Hence $|a|=1$ .
	\end{proof}
	
	\begin{proof}[Proof of Theorem \ref{baker generalization 2}]
		First, from Lemma \ref{rotation} we will have that $a=1$. From Lemma \ref{pits} we will have that $f$ does not have the pits effect. Hence, from \cite[Corollary 1.1]{B-Nicks} we have that $\capac {\mathcal{J}}(f)>0$. Also note here that if $f$ does not have the pits effect then, by the definition, $f+c$ also does not. This again implies that $\capac {\mathcal{J}}(g)>0$. Now we can apply Theorem \ref{baker generalization} and conclude that ${\mathcal{J}}(f)={\mathcal{J}}(g)$.
	\end{proof}
	\section{Proof of Theorem \ref{theorem2}}
	We will prove first that $O_f^{-}(\infty)\subset O_g^{-}(\infty)$. To that end, let $x_0\in \mathbb{R}^d$ be a point in $O_f^{-}(\infty)$. This means that $f^n(x_0)=\infty$ for some $n\in \mathbb{N}$. This in turn implies that $f^{n-1}(x_0)$ is a pole of $f$. Now, note that $f$ and $g$ must have the same poles since if $z_0$ is a pole of $f$ but not $g$ then $g\circ f$ has an essential singularity in $z_0$ while $f\circ g$ does not, thus $f\circ g\not= g\circ f$ on a punctured neighbourhood of $z_0$. Hence, we will also have that $g(f^{n-1}(x_0))=\infty$. By using the fact that $f$ commutes with $g$ we have that $f(g(f^{n-2}(x_0)))=\infty$ and thus $ g(f^{n-2}(x_0))$ is a pole of $f$ which again implies it is also a pole of $g$. Using this argument $n$ times yields that $g^n(x_0)=\infty$ and thus $x_0\in  O_g^{-}(\infty)$.\\\\
	The other inclusion follows similarly by switching the roles of $f$ and $g$. Hence we have that $O_f^{-}(\infty)= O_g^{-}(\infty)$ and thus ${\mathcal{J}}(f)={\mathcal{J}}(g)$.
	
	\section{Proof of Theorems \ref{examples}, \ref{theorem1}}
	\begin{proof}[Proof of Theorem \ref{examples}]
		We want to construct uncountably many quasiregular maps that commute with a specific entire function. In order to do that we will follow the example given in \cite[section 2]{B-H} where the authors construct uncountably many continuous functions $g$ that commute with the function $f(z)=c(e^{z^2}-1)$, where $c$ is a large positive number. Note that $f$ has a superattracting fixed point at $0$ and there is a conformal function $\phi$, from the immediate basin of attraction $A$ of $f$ to the unit disk, that conjugates $f$ with $z^2$. That map $\phi$ is in fact quasiconformal on an open set that contains $A$ and that is due to the fact that $f$ is a polynomial-like mapping (see \cite{B-H} and references therein for more details). In order to construct the required map they first define a function $G$ which commutes with $z^2$. \\\\
		We define $G:\mathbb{C}\to\mathbb{C}$ by $G(z)=|z|^{m-1}z$ for some positive real number $m$. As we can easily see $G$ commutes with $z\mapsto z^2$. In fact $G$ is well known to be $M$-quasiconformal with $M=\max\{m,1/m\}$. Also, note that $G(z)=z$, for $|z|= 1$.
		Because we have uncountably many choices for $m$ we also have uncountably many such maps $G$.\\\\
		Next, by defining $g(z)=\phi^{-1}(G(\phi(z)))$ for $z\in A$ we see that \[g(f(z))=\phi^{-1}(G(\phi(f(z))))=\phi^{-1}(G(\phi(z)^2))=f(\phi^{-1}(G(\phi(z))))=f(g(z)), z\in A.\]In order to extend $g$ to the whole plane we argue as follows. If $B$ is a component of the basin of attraction of the fixed point at $0$ then there is a $n$ with $f^n(B)=A$. Choose $n$ to be the minimal with that property and define $g(z)=f^{-n}(g(f^n(z)))$, for all $z\in B$, with the appropriate branch of $f^{-n}$. Since $G$ coincides with the identity map on the unit circle and $\phi$ extends continuously and bijectively on $\partial A$  then we can extend $g$ to $\partial A\cup \partial B$ by setting $g(z)=z$ there. We can now extend $g$ to the rest of the plane, the complement of the basin of attraction, by setting $g(z)=z$ there. The functions $g:\mathbb{C}\to\mathbb{C}$ we get through this method  will be quasiregular and because our choices for $G$ are uncountably many, so are our functions $g$. 
	\end{proof}
	For the proof of Theorem \ref{theorem1} we will need a lemma first which is a generalization of a theorem of Polya in \cite{Polya1926}.
	\begin{lemma}\label{lemmasix}
		Let $f:\mathbb{R}^d\to\mathbb{R}^d$ and $g:\mathbb{R}^d\to\mathbb{R}^d$ be  quasiregular maps. Then \[M(r,f\circ g)\geq M(cM(r/2,g),f),\] where $c$ is a constant and $r$ is a large enough positive real number.
	\end{lemma}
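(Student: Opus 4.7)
The plan is to generalise P\'olya's classical argument for entire functions \cite{Polya1926} to the quasiregular setting, by splitting the proof into (i) a covering statement for $g$ and (ii) an application of the maximum modulus principle for $f$.

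Setting $\mu = M(r/2, g)$, the covering statement I would aim for is the following: there exists a constant $c > 0$ depending only on the dilatation $K(g)$ and the dimension $d$, such that for all sufficiently large $r$,
\[ g(\overline{B(0,r)}) \supset \bigl\{ w \in \mathbb{R}^d : |w| = c\mu \bigr\}.\]
Granted this, the lemma follows at once. Since $f$ is a non-constant quasiregular map, by the maximum modulus principle there is a point $w^\star$ on the sphere $\{|w|=c\mu\}$ with $|f(w^\star)| = M(c\mu, f)$. The covering statement then yields a point $z^\star \in \overline{B(0,r)}$ with $g(z^\star) = w^\star$, and hence
\[ M(r, f\circ g) \geq |f(g(z^\star))| = M(c\mu, f) = M(cM(r/2,g), f),\]
as required.

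The hard part is the covering statement, because the standard holomorphic tools (Wiman--Valiron theory, Koebe-type rescaling near a maximum-modulus point) rely on conformality and do not transfer to $\mathbb{R}^d$. My plan is instead to use a topological degree argument combined with quantitative properties of quasiregular maps. Pick $z_0 \in \partial B(0,r/2)$ with $|g(z_0)| = \mu$. Using that $g$ is open and discrete, the local degree $\deg(g, B(0,r), w)$ is defined and locally constant on $\mathbb{R}^d \setminus g(\partial B(0,r))$, and is nonzero in a neighbourhood of $g(z_0)$. One then has to show, by combining the $K_I$-inequality (Theorem~\ref{KI ineq}), standard distortion estimates for $K$-quasiregular maps on the annulus $\{r/2 \leq |z| \leq r\}$, and Rickman's Picard theorem (to rule out that $g$ avoids a large portion of a sphere around the origin), that the union of bounded components of $\mathbb{R}^d \setminus g(\partial B(0,r))$ on which the degree is nonzero grows with $r$ in a controlled way, eventually engulfing the entire sphere $\{|w|=c\mu\}$ for some $c$ depending only on $K(g)$ and $d$.

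The principal obstacle is making this topological picture quantitative and uniform in $r$: one must verify that the relevant component is truly comparable in size to $\mu$, rather than being a thin set clustered near $g(z_0)$, and that the constant $c$ can indeed be chosen independently of $r$. Once this is achieved, Steps (i) and (ii) above combine to give the stated inequality.
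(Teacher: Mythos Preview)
Your two-step decomposition---(i) a covering statement for $g$, then (ii) the maximum modulus principle for $f$---is exactly the paper's approach, and step~(ii) is carried out just as you describe. The difference lies entirely in (i): the paper does not prove the covering statement from scratch but simply quotes \cite[Lemma~4.1]{BFLM} (a reference already used elsewhere in the paper), which asserts that for all large $r$ there exists $L\ge cM(r/2,g)$ with $S(L)\subset g(B(0,r))$. One appeal to the monotonicity of $r\mapsto M(r,f)$ then passes from $L$ down to $cM(r/2,g)$, and the whole proof is four lines.

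Your sketched route to (i), on the other hand, has a real gap that the listed ingredients do not close. For a sense-preserving quasiregular map the Brouwer degree $\deg(g,B(0,r),w)$ is positive precisely on $g(B(0,r))\setminus g(\partial B(0,r))$, so saying that ``the components on which the degree is nonzero engulf $S(c\mu)$'' is just a restatement of the covering claim itself. To make the degree argument bite you would need $g(\partial B(0,r))$ to stay outside the ball $B(0,c\mu)$, i.e.\ a minimum-modulus estimate $\min_{|x|=r}|g(x)|\ge cM(r/2,g)$, and this is false in general---already for $e^z$ on $\mathbb{C}$. Rickman's Picard theorem does not help either: it concerns globally omitted values of transcendental-type maps and says nothing about the image of a fixed ball (and the lemma is in fact applied in the paper with $g$ of polynomial type). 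So keep your overall structure, but replace the attempted argument for (i) with the citation to \cite[Lemma~4.1]{BFLM}; what that lemma actually uses is a capacity/path-family modulus estimate, which is the natural quantitative substitute in the quasiregular category for the holomorphic tools you rightly discarded.
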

	\begin{proof}
		From  \cite[Lemma 4.1]{BFLM} we know that if $D(r)$ denotes the ball of radius $r$ centred at $0$ and $S(r)$ the corresponding sphere, then for large enough $r$ there exists an $L\geq c M(r/2,g)$ such that $S(L)\subset g(D(r))$, where $c$ is a constant that does not depend on $L$ or $r$. By the choice of $L$ and the maximum modulus principle we have that \[M(cM(r/2,g),f)\leq M(L, f) .\] Now since $S(L)$ is inside $g(D(r))$ we will have that $M(L, f)=|f(g(x_0))|$, for some $x_0$ in $D(r)$. Thus by the maximum modulus principle again we will have that \[M(cM(r/2,g),f)\leq M(r,f\circ g).\]
	\end{proof}

	\begin{proof}[Proof of Theorem \ref{theorem1}]
		Suppose that $f$ is of transcendental type. By assumption $g$ is of polynomial type and $\deg g>K(g)$. Due to the maximum modulus principle we have that 
		\begin{equation}\label{firstq}
		M(r,g\circ f)\leq M(M(r,f),g).
		\end{equation}
		We know from \cite[Theorem III.4.7]{Rickman} that for all $x$ with large enough $|x|$ it is true that  \[A|x|^{n_1}\leq|g(x)|\leq B|x|^{n_2}, \] where $A,B$ constants, $n_1=\left(\deg g/K_I(g)\right)^{\frac{1}{d-1}}$ and $n_2=\left(\deg g \cdot K_O(f)\right)^{\frac{1}{d-1}}.$ Note that by our assumptions $n_1>1.$
		This implies that for large enough $r>0$ we have that \[Ar^{n_1}\leq M(r,g)\leq Br^{n_2}, \hspace{2mm}\text{where} \hspace{2mm} n_1, n_2 >1.\]  Hence by (\ref{firstq}) we have that \begin{equation}\label{secondq}
		M(r,g\circ f)\leq B \cdot M(r,f)^{n_2}.
		\end{equation} By Lemma \ref{lemmasix} we now know that \[M(r,g\circ f)=M(r, f\circ g)\geq M(cM(r/2,g),f).\] Thus
		\begin{equation}\label{thirdq}
		M(r,g\circ f)\geq M\left(cA\left(\frac{r}{2}\right)^{n_1},f\right).
		\end{equation}
		By (\ref{secondq}) and (\ref{thirdq}) we now get $$M\left(cA\left(\frac{r}{2}\right)^{n_1},f\right)\leq B \cdot M(r,f)^{n_2}.$$  Taking logarithms and rearranging we get that \[\frac{\log M(cA(\frac{r}{2})^{n_1},f)}{\log M(r,f)}\leq \log B \cdot n_2.\] But for large enough $r$ it is true that $cA(\frac{r}{2})^{n_1}\geq Q r$, where $Q>1$ is some constant. Thus  the previous inequality implies that \[\limsup_{\rho\to\infty}\frac{\log M(Q\rho,f)}{\log M(\rho,f)}<\infty.\] This  contradicts  \cite[Lemma 3.3]{Berg2006}.
	\end{proof}
	\section{Examples}
	Let us now give some examples of permutable maps and confirm that they have the same Julia set. First we give examples of holomorphic functions and then their counterparts in higher dimensions. We note here that the first three classes of examples are essentially the only ones possible in the case of rational functions that do not share a common iterate. The problem is still open in the case where the functions share an iterate. See \cite{er2, Ri2} for more details.
	
	\begin{remark1}\hfill
		\begin{enumerate}
			\item \textup{Consider the family of functions $f_n(z)=z^n, n\geq 2$. Obviously $f_n\circ f_m=f_{nm}=f_m\circ f_n.$ We can also easily see that ${\mathcal{J}} (f_n)=S^1,$ for all $ n\geq 2$, where $S^1$ denotes the unit circle. Thus ${\mathcal{J}}(f_n)={\mathcal{J}}(f_m)$.}
			
			\item \textup{Consider the family of Tchebycheff polynomials that satisfy $T_n(\cos z)=\cos(nz), n\geq 2$. It is easy to see that $T_n\circ T_m=T_m\circ T_n$. Also each of the Tchebycheff polynomials has as a Julia set the interval $[-2,2]$ (see \cite{carleson-gamelin} p. 30), so clearly  ${\mathcal{J}}(T_n)={\mathcal{J}}(T_m)$.}
			\item \textup{The family of \textit{Latt\`{e}s} maps provides another example of commuting functions. A rational map, of degree at least two, of the form \[f=\Theta\circ L\circ \Theta^{-1}\] is called Latt\`es. Here $L$ is an affine self map of the torus $\mathbb{C}/ \Lambda$, where $\Lambda\subset \mathbb{C}$ is a lattice of rank two, and $\Theta$ is a holomorphic map from the torus to $\hat{\mathbb{C}}$. One possible option is to choose $L(z)=az$ for any $a\in \mathbb{Z}[i]=\{x+yi:x,y\in\mathbb{Z}\}$, $|a|\geq2$ and $\Theta=\wp^2(z),$ where $\wp(z)$ is the Weierstrass elliptic function with periods $1$ and $i$. Then the Latt\`es maps that we take for the different values of $a$ are commuting. Also it is well known that the Julia set of Latt\`es maps is the entire Riemann sphere. For more details on Latt\`es maps we refer to the survey \cite{milnor-lattes}.}
			
			\item \textup{Consider an entire periodic function $P:\mathbb{C}\to \mathbb{C}$, with period $c\in \mathbb{C}$. Take $f(z)=P(z)+z$ and $g(z)=P(z)+z+c$. Then $f,g$ are permutable. Using now  a result of Baker \cite[Lemma~4.5]{Baker2}, which we generalized in Theorems \ref{baker generalization} and \ref{baker generalization 2} we conclude that ${\mathcal{J}}(f)={\mathcal{J}}(g)$. }
		\end{enumerate}	
	\end{remark1}
	\begin{remark2}\hfill
		\begin{enumerate}
			\item \textup{In \cite{Mayer2, Mayer1} Mayer constructs uniformly quasiregular analogues of the power maps, of Tchebycheff polynomials and of Latt\`es type maps which can be easily seen, just like in the complex case, that are permutable. Also those families of maps have the same Julia sets: the unit sphere, the unit disc and $\overline{\mathbb{R}^d}$ respectively.}
			
			\item \textup{There is a quasiregular analog of the exponential map in the complex plane called the Zorich map which was first defined by Zorich in \cite{Zorich}. For simplicity assume we work on $\mathbb{R}^3$ and denote this map by $Z:\mathbb{R}^3\to \mathbb{R}^3$. This map is periodic, with period 4, in its first two variables. In \cite{N-S} Nicks and Sixsmith defined a quasiregular map $g:\mathbb{R}^3\to \mathbb{R}^3$ of transcendental type such that
				\[ 
				g= \left\{
				\begin{array}{ll}
				Z+Id & x_3>L \\
				Id & x_3<0
				\end{array} ,
				\right. 
				\]
				where $Id$ the identity map and $L>0$ is a constant. By its construction this map satisfies $g(x_1+4, x_2, x_3)=g(x_1,x_2,x_3)+(4,0,0)$ for $0\leq x_3\leq L$ and hence for all $x_3$ (see \cite[section 6]{N-S} for details).
				Now define the function $f(x_1,x_2,x_3)=g(x_1,x_2,x_3)+(4,0,0)$. It is quite easy to see that $f$ commutes with $g$. Hence, by applying Theorem~\ref{baker generalization 2} we conclude that ${\mathcal{J}}(f)={\mathcal{J}}(g)$.}
			
			\item \textup{Another example is provided by \cite[section 7]{N-S} where the authors define the map \[h(x_1,x_2,x_3) = g(x_1,x_2,x_3)-(0,0,L'),\] where $g$ is the function of the previous example and $L'>0$ is a large constant. They also prove that $\mathcal{A}$$(h)\subset \mathcal{J}$$(h)$ and is quite easy to see that $h$ commutes with $h+(4,0,0)$. Hence, in this example we can apply Theorem \ref{my theorem} and conclude that the two functions have the same Julia set.}
		\end{enumerate}
	\end{remark2}
\bibliographystyle{amsplain}
\bibliography{bibliography}

\providecommand{\bysame}{\leavevmode\hbox to3em{\hrulefill}\thinspace}
\providecommand{\MR}{\relax\ifhmode\unskip\space\fi MR }
\providecommand{\MRhref}[2]{%
  \href{http://www.ams.org/mathscinet-getitem?mr=#1}{#2}
}
\providecommand{\href}[2]{#2}
\begin{thebibliography}{10}

\bibitem{baker1958}
I.~N. Baker, \emph{Zusammensetzungen ganzer funktionen}, Math. Z. \textbf{69}
  (1958), 121--163.

\bibitem{baker1}
I.~N. Baker, \emph{Permutable entire functions}, Math. Z. \textbf{79} (1962),
  243--249.

\bibitem{Baker2}
\bysame, \emph{Wandering domains in the iteration of entire functions}, Proc.
  London Math. Soc. \textbf{49} (1984), 563--576.

\bibitem{BRS}
A.~M. Benini, P.~J. Rippon, and G.~M. Stallard, \emph{Permutable entire
  functions and multiply connected wandering domains}, Advances In Mathematics
  \textbf{287} (2016), 451--462.

\bibitem{survey}
W.~Bergweiler, \emph{Iteration of meromorphic functions}, Bull. Amer. Math.
  Soc. \textbf{29} (1993), 151--188.

\bibitem{Berg2006}
\bysame, \emph{Fixed points of composite entire and quasiregular maps}, Ann.
  Acad. Sci. Fenn. Math. \textbf{31} (2006), 523--540.

\bibitem{Berg1}
\bysame, \emph{Iteration of quasiregular mappings}, Comput. Methods Funct.
  Theory \textbf{10} (2010), 455--481.

\bibitem{berg2013}
\bysame, \emph{Fatou-{Julia} theory for non-uniformly quasiregular maps},
  Ergodic Theory Dynam. Systems \textbf{33} (2013), no.~1, 1--23.

\bibitem{B-H}
W.~Bergweiler and A.Hinkkanen, \emph{On semiconjugation of entire functions},
  Math. Proc. Cambridge Philos. Soc. \textbf{137} (1999), 641--651.

\bibitem{B-D-F}
W.~Bergweiler, D.~Drasin, and A.~Fletcher, \emph{The fast escaping set for
  quasiregular mappings}, Anal. Math. Phys. \textbf{4} (2014), 83--98.

\bibitem{BFLM}
W.~Bergweiler, A.~Fletcher, J.~Langley, and J.~Meyer, \emph{The escaping set of
  a quasiregular mapping}, Proc. Amer. Math. Soc. \textbf{137} (2009), no.~2,
  641--651.

\bibitem{B-F-N}
W.~Bergweiler, A.~Fletcher, and D.~Nicks, \emph{The {Julia} set and the fast
  escaping set of a quasiregular mapping}, Comput. Methods Funct. Theory.
  \textbf{14} (2014), 209--218.

\bibitem{B-Nicks}
W.~Bergweiler and D.A. Nicks, \emph{Foundations for an iteration theory of
  entire quasiregular maps}, Israel J. Math. \textbf{201} (2014), no.~1,
  147--184.

\bibitem{carleson-gamelin}
L.~Carleson and T.~W. Gamelin, \emph{Complex dynamics}, Universitext,
  Springer-Verlag, New York, 1993.

\bibitem{er2}
A.~E. Eremenko, \emph{On some functional equations connected with iteration of
  rational functions}, Leningrad Math J. \textbf{1} (1990), no.~4, 905--919.

\bibitem{fatou1919}
P.~Fatou, \emph{Sur les equations fonctionelles}, Bull. Soc. Math. France.
  \textbf{47-48} (1919, 1920), 161--271, 33--94, 208--314.

\bibitem{fatou1923}
\bysame, \emph{Sur l'iteration analytique et les substitutions permutables}, J.
  Math. Pures Appl. \textbf{2} (1923), 343--384.

\bibitem{fatou1926}
\bysame, \emph{Sur l'iteration des fonctions transcendantes entieres}, Acta
  Math. \textbf{47} (1926), 337--370.

\bibitem{FLETCHER2010}
Alastair~N. Fletcher and Daniel~A. Nicks, \emph{Quasiregular dynamics on the
  n-sphere}, Ergodic Theory Dynam. Systems \textbf{31} (2010), no.~1, 23--31.

\bibitem{iyer}
V.~Ganapathy~Iyer, \emph{On permutable integral functions}, J. London Math.
  Soc. \textbf{s1-34} (1959), no.~2, 141--144.

\bibitem{Julia1918}
G.~Julia, \emph{Sur l'iteration des fonctions rationelles}, J. Math. Pures
  Appl. \textbf{4} (1918), no.~7, 47--245.

\bibitem{julia1922}
\bysame, \emph{Memoire sur la permutabilite des fractions rationnelles}, Ann.
  Sci. Ecole Norm. Sup. \textbf{39} (1922), no.~3, 131--215.

\bibitem{Littlewood}
J.~E. Littlewood and A.~C. Offord, \emph{On the distribution of zeros and
  $\alpha$-values of a random integral function.}, Ann. of Math. (2)
  \textbf{49} (1948), 885--952.

\bibitem{Mayer2}
V.~Mayer, \emph{Uniformly quasiregular mapings of {Lattes} type}, Conform.
  Geom. Dyn. \textbf{1} (1997), 104--111.

\bibitem{Mayer1}
\bysame, \emph{Quasiregular analogs of critically finite rational functions
  with parabolic orbifold}, Journal D'Analyse Math. \textbf{75} (1998),
  105--119.

\bibitem{milnor-lattes}
J.~Milnor, \emph{On {Latt\`es} maps}, arXiv:math/0402147 (2004).

\bibitem{milnor}
\bysame, \emph{Dynamics in one complex variable}, Annals of Mathematics
  Studies, Princeton University Press, 2006.

\bibitem{N-S}
D.~A. Nicks and D.~J. Sixsmith, \emph{Periodic domains of quasiregular maps},
  Ergodic Theory Dynam. Systems \textbf{38} (2018), 2321--2344.

\bibitem{NICKS2017}
Daniel Nicks and David Sixmith, \emph{Hollow quasi-fatou components of
  quasiregular maps}, Math. Proc. Cambridge Philos. Soc. \textbf{162} (2017),
  no.~3, 561--574.

\bibitem{Osborne2016a}
J.~W. Osborne and D.~J. Sixsmith, \emph{On permutable meromorphic functions},
  Aequationes Math. \textbf{90} (2016), no.~5, 1025--1034.

\bibitem{Polya1926}
G.~P{\'{o}}lya, \emph{On an integral function of an integral function}, J.
  London Math. Soc. \textbf{s1-1} (1926), no.~1, 12--15.

\bibitem{Rickman1980}
S.~Rickman, \emph{On the number of omitted values of entire quasiregular
  mappings}, Journal d'Analyse Math. \textbf{37} (1980), 100--117.

\bibitem{rickman1985}
\bysame, \emph{The analogue of {Picard's} theorem for quasiregular mappings in
  dimension three}, Acta Math. \textbf{154} (1985), 195--242.

\bibitem{Rickman}
\bysame, \emph{Quasiregular mappings}, vol.~26, Ergeb. Math. Grenzgeb., no.~3,
  Springer-Verlag, Berlin, 1993.

\bibitem{R-S2}
P.~J. Rippon and G.~M. Stallard, \emph{On questions of {Fatou} and {Eremenko}},
  Proc. Amer. Math. Soc. \textbf{133} (2005), 1119--1126.

\bibitem{R-S}
\bysame, \emph{Fast escaping points of entire fucntions}, Proc. London Math.
  Soc. \textbf{105} (2012), no.~4, 787--820.

\bibitem{Ri1}
J.~F. Ritt, \emph{On the iteration of rational functions}, Trans. Amer. Math.
  Soc. \textbf{21} (1920), no.~3, 348--356.

\bibitem{Ri2}
\bysame, \emph{Permutable rational functions}, Trans. Amer. Math. Soc.
  \textbf{25} (1923), no.~3, 291--309.

\bibitem{vuorinen}
M.~Vuorinen, \emph{Conformal geometry and quasiregular mappings}, Lecture Notes
  in Math., vol. 1319, Springer-Verlag, Berlin, 1988.

\bibitem{WARREN2018}
L.~Warren, \emph{On the iteration of quasimeromorphic mappings}, Math. Proc.
  Cambridge Philos. Soc. \textbf{168} (2018), no.~1, 1--11, Advance online
  publication. doi: 10.1017/s030500411800052x.

\bibitem{Warren2019}
Luke Warren, \emph{Constructing a quasiregular analogue of $z \exp(z)$ in
  dimension 3}, arXiv:1907.04720 (Preprint).

\bibitem{Zorich}
V.~A. Zorich, \emph{The theorem of {M. A. Lavrent'ev} on quasiconformal
  mappings in space}, Mat. Sb. \textbf{74} (1967), 417--433.

\end{thebibliography}
\end{document}